\newtheorem{theorem}{Theorem}[section]
\newtheorem{lemma}[theorem]{Lemma}
\newtheorem{corollary}[theorem]{Corollary}
\theoremstyle{definition}
\newtheorem{remark}[theorem]{Remark}
\newtheorem{definition}[theorem]{Definition}
\numberwithin{equation}{section}
\numberwithin{equation}{section}
\begin{document}

\arraycolsep=1pt

\title{\bf\Large
Sharp Hardy-Rellich Type Inequalities Associated with Dunkl Operators
\footnotetext {\hspace{-0.35cm}
2010 {\it Mathematics Subject Classification}. Primary: 26D10;
Secondary: 20F55, 42B37.
\endgraf {\it Key words and phrases}.
Hardy inequalities, Hardy-Rellich inequalities, Dunkl operators, Best constant.
}}
\author{Li Tang, Haiting Chen, Shoufeng Shen and Yongyang Jin\,\footnote{Corresponding author}}
\date{}
\maketitle

\vspace{-0.6cm}

\begin{center}
\begin{minipage}{13.5cm}
{\small {\bf Abstract} \quad
In this paper, we obtained the Dunkl analogy of classical $L^{p}$
Hardy inequality for $p>N+2\gamma$ with sharp constant $\left(\frac{p-N-2\gamma}{p}\right)^{p}$, where $2\gamma$ is the degree of
weight function associated with Dunkl operators, and $L^{p}$ Hardy inequalities
with distant function in some G-invariant domains. Moreover we proved two sharp
Hardy-Rellich type inequalities for Dunkl operators.}
\end{minipage}
\end{center}

%
%
%
%
%

\section{Introduction}
The classical Hardy inequality\\
\begin{equation}\label{}
\int_{\mathbb{R}^{N}}|\nabla u|^{p}dx\geq\left|\frac{N-p}{p}\right|^{p}\int_{\mathbb{R}^{N}}\frac{|u|^{p}}{|x|^{p}}dx
\end{equation}
holds for $u\in{C}^{\infty}_{0}(\mathbb{R^{N}})$ when $1<p<N$ and for $u\in{C}^{\infty}_{0}(\mathbb{R^{N}}\backslash\{0\})$ when $N<p<\infty$. It has extensive application in analysis, partial differential equation and physical research. In \cite{a}, Hardy firstly proved this inequality in the case of one dimension. Since then, many researchers devoted themselves to it and made great progress, not only in Euclidean spaces, there are counterparts in Riemannian manifolds and Carnot groups, see \cite{b, d, g, l, m} and the references therein.

If $\mathbb{R}^{N}$ is replaced by a bounded convex domain $\Omega$, the following sharp inequality holds for $1<p<\infty$\\
\begin{equation}\label{}
\int_{\Omega}|\nabla u|^{p}dx\geq\left(\frac{p-1}{p}\right)^{p}\int_{\Omega}\frac{|u|^{p}}{{\delta}^{p}}dx
\end{equation}
where $\delta(x):=dist(x,\partial\Omega)$, see \cite{o}. Maz'ya proved in \cite{e} that (1.2) can be characterised in terms of p-capacity. When $\Omega$ is non-convex, the problem is more complicated. For domains such that $-\Delta\delta$ is nonnegative in the distributional sense, some results were obtained by Barbatis, Filippas and Tertikas in \cite{u}. It is equivalent between non-negativity of $-\Delta\delta$ in the distributional sense and the mean-convexity of the domain when the boundary is smooth enough, see \cite{p, v, w, x}. In \cite{f}, Ancona obtained some results in planar simply connected domains by using Koebe one-quarter theorem; some other Hardy inequalities for special domains see \cite{g}.

The Rellich inequality\\
\begin{equation}
\int_{\mathbb{R}^{N}}|\Delta u|^{2}dx\geq\frac{N^{2}(N-4)^{2}}{16}\int_{\mathbb{R}^{N}}\frac{|u|^{2}}{|x|^{4}}dx,
\end{equation}
is a generalisation of Hardy inequality, which holds for $u\in{C}^{\infty}_{0}(\mathbb{R}^{N})$ and the constant $\frac{N^{2}(N-4)^{2}}{16}$ is sharp when $N\geq5$. In \cite{k}, Tertikas and Zographopoulos obtained a Hardy-Rellich type inequality which read as\\
\begin{equation}\label{}
\int_{\mathbb{R}^{N}}|\Delta u|^{2}dx\geq\frac{N^{2}}{4}\int_{\mathbb{R}^{N}}\frac{|\nabla u|^{2}}{|x|^{2}}dx,
\end{equation}
where $N\geq5$, the constant $\frac{N^{2}}{4}$ is also sharp.

In the setting of Dunkl operators, the author In \cite{h}, proved a sharp analogical inequality of (1.1) for
\begin{equation}
\int_{\mathbb{R}^{N}}|\nabla_{k} u|^{2}d\mu_{k}\geq\left(\frac{N+2\gamma-2}{2}\right)^{2}\int_{\mathbb{R}^{N}}\frac{|u|^{2}}{|x|^{2}}d\mu_{k},
\end{equation}
and the following inequality for $p\neq{N+2\gamma}$ and small $\gamma$
\begin{equation}
\int_{\mathbb{R}^{N}}|\nabla_{k} u|^{p}d\mu_{k}\geq{C}\int_{\mathbb{R}^{N}}\frac{|u|^{p}}{|x|^{p}}d\mu_{k},
\end{equation}
however the best constant for $p\neq2$ in (1.6) is not known. They also obtained an analogical inequality of (1.3) for Dunkl Laplacian
\begin{equation}
\int_{\mathbb{R}^{N}}|\Delta_{k} u|^{2}d\mu_{k}\geq\frac{(N+2\gamma)^{2}(N+2\gamma-4)^{2}}{16}\int_{\mathbb{R}^{N}}\frac{|u|^{2}}{|x|^{4}}d\mu_{k},
\end{equation}
where the constant $\frac{(N+2\gamma)^{2}(N+2\gamma-4)^{2}}{16}$ is sharp.

The plan of this paper is as follows: we introduce some definitions and basic facts of Dunkl operators in the second section. Then, in section three we obtained some $L^{p}$ Hardy inequalities associated with distant function for Dunkl operators by choosing specific vector fields, especially an Hardy inequality on a non-convex domain $\Omega={B}(0,R)^{c}$, which leads to classical sharp $L^{p}$-Hardy inequality associated with Dunkl operators for $p>N+2\gamma$. In the last section we obtained two sharp Hardy-Rellich type inequalities for Dunkl operators by the method of spherical h-harmonic decomposition.

\section{ Preliminaries }
Dunkl theory is a generalisation of Fourier analysis and special function theory about root system. It generalized Bessel functions on flat symmetric spaces, also Macdonald polynomials on affine buildings. Moreover, Dunkl theory has extensive application in algebra (double affine Hecke algebras), probability theory (Feller processes with jump) and mathematical physics (quantum many body problems, Calogero-Moser-Sutherland molds).

In this section we will introduce some fundamental concepts and notations of Dunkl operators, see also \cite{i}, \cite{n} for more details.
\\\indent
If a finite set $R\subset \mathbb{R}^{N}\setminus\{0\}$ such that $R\cap\alpha\mathbb{R}=\{-\alpha,\alpha\}$ and $\sigma_{\alpha}(R)=R$ for all $\alpha\in{R}$, then we call $R$ a root system. Denote $\sigma_{\alpha}$ as the reflection on the hyperplane which is orthogonal to the root $\alpha$, written as\\
\begin{equation*}
\sigma_{\alpha}x=x-2\frac{\langle\alpha,x\rangle}{\langle\alpha,\alpha\rangle}\alpha.
\end{equation*}
We write $G$ as the group generated by all the reflections $\sigma_{\alpha}$ for $\alpha\in{R}$, it is a finite group. Let ${k}:R\longrightarrow[0,\infty)$ be a G-invariant function, i.e.,
$k(\alpha)=k(v\alpha)$ for all $v\in{G}$ and all $\alpha\in{R}$, simply written $k_{\alpha}=k(\alpha)$. $R$ can be denoted as $R=R_{+}\cup(-R_{+})$, when $\alpha\in R_{+}$, then$-\alpha\in-R_{+}$, and $R_{+}$ is called a positive subsystem. We fix a positive subsystem $R_{+}$ in a root system $R$. Without loss of generality we assume that $|\alpha|^{2}=2$ for all $\alpha\in{R}$.
\begin{definition}
For $i=1,...,N,$ the Dunkl operators on $C^{1}(\mathbb{R}^{N})$ is defined as follows
\begin{equation*}
T_{i}u(x)=\partial_{i}u(x)+\sum\limits_{\alpha\in{R_{+}}}k_{\alpha}\alpha_{i}\frac{u(x)-u(\sigma_{\alpha}x)}{\langle\alpha,x\rangle}.
\end{equation*}
\end{definition}
By this definition we can see that even if the decomposition of $R$ is not unique, the different choice of positive subsystem make no difference in the definitions due to the G-invariance of $k$.
Denote by $\nabla_{k}=(T_{1},\ldots,T_{N})$ the Dunkl gradient, $\Delta_{k}=\sum\limits_{i=1}^{N}T^{2}_{i}$ the Dunkl-Laplacian. Especially, for $k=0$ we have $\nabla_{0}=\nabla$ and $\Delta_{0}=\Delta$. The Dunkl-Laplacian can be written in terms of the usual gradient and Laplacian as follows,
\begin{equation*}
\Delta_{k}u(x)=\Delta{u(x)}+2\sum\limits_{\alpha\in{R_{+}}}k_{\alpha}\left[\frac{\langle\nabla{u(x)},\alpha\rangle}{\langle\alpha,x\rangle}-\frac{u(x)-u(\sigma_{\alpha}x)}{\langle\alpha,x\rangle^{2}}\right].
\end{equation*}
The weight function naturally associated to Dunkl operators is
\begin{equation*}
\omega_{k}(x)=\prod_{\alpha\in{R_{+}}}|\langle\alpha,x\rangle|^{2k_{\alpha}}.
\end{equation*}
This is a homogeneous function of degree $2\gamma$, where
\begin{equation*}
\gamma:= \sum\limits_{\alpha\in{R_{+}}}k_{\alpha}.
\end{equation*}
We will work in spaces $L^{p}(\mu_{k})$, where $d\mu_{k}=\omega_{k}(x)dx$ is the weighted measure. About this weighted measure we have the formula of integration by parts
\begin{equation*}
\int_{{\mathbb{R}}^{N}}T_{i}(u)vd\mu_{k}=-\int_{{\mathbb{R}}^{N}}uT_{i}(v)d\mu_{k}.
\end{equation*}
If at least one of the functions $u$, $v$ is G-invariant, the following Leibniz rule
\begin{equation*}
T_{i}(uv)=uT_{i}v+vT_{i}u,
\end{equation*}
holds. In general we have
\begin{equation*}
T_{i}(uv)(x)=v(x)T_{i}u(x)+u(x)T_{i}v(x)-\sum\limits_{\alpha\in{R_{+}}}k_{\alpha}\alpha_{i}
\frac{(u(x)-u(\sigma_{\alpha}x))(v(x)-v(\sigma_{\alpha}x))}{\langle\alpha,x\rangle}.
\end{equation*}

\section{ $L^{p}$ Hardy inequalities }
In this section we proved a general Hardy inequality with remainder terms for Dunkl operators in G invariant domains, then we get the Dunkl analogy of Hardy inequality (1.1) for $p>N+2\gamma$.

Firstly, we review some basic facts of distant function.
\begin{lemma}(\cite{j})
Let $\Omega\subset\mathbb{R}^N$ be an open set such that $\partial\Omega\neq \emptyset$. The following propositions hold true.\\
$\emph{(i)}$  The function $\delta(x)$ is differentiable at a point $x\in\Omega$ if and only if there exists a unique point $N(x)=y\in\partial\Omega$ such that $\delta(x)=|x-y|$. If $\delta(x)$ is differentiable, then $\nabla{\delta}(x)=\frac{x-y}{|x-y|}$, and $|\nabla{\delta}|=1$.\\
$\emph{(ii)}$  Denote $\Sigma(\Omega)$ as the the set of points where $\delta(x)$ is not differentiable. If $\Omega$ is bounded and with $C^{2,1}$ boundary, then $|\Sigma(\Omega)|=0$. \\
$\emph{(iii)}$  Assume that $\Omega$ is convex. Then $\Delta{\delta}\leq {0}$ in the sense of distributions, i.e.,
\end{lemma}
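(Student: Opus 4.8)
The plan is to develop all three parts from the basic fact that $\delta(x)=\dist(x,\partial\Omega)$ is $1$-Lipschitz, since for any $x,x'$ and any $y\in\partial\Omega$ one has $\delta(x)\le|x-y|\le|x-x'|+|x'-y|$, and taking the infimum over $y$ gives $|\delta(x)-\delta(x')|\le|x-x'|$. For part (i), first suppose $\delta$ is differentiable at $x$ and let $y$ be any nearest point (one always exists because $\partial\Omega$ is closed, so a minimizing sequence has a convergent subsequence). Since $\delta(z)\le|z-y|$ for all $z$ with equality at $z=x$, the function $z\mapsto\delta(z)-|z-y|$ attains an interior maximum at $x$, so its gradient vanishes there; hence $\nabla\delta(x)=\frac{x-y}{|x-y|}$. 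As the right-hand side is forced to be the same vector for every nearest point, the nearest point is unique, the stated formula holds, and $|\nabla\delta(x)|=1$. Conversely, suppose the nearest point $y$ is unique and set $v=\frac{x-y}{|x-y|}$. Testing against $|\cdot-y|$ gives the one-sided expansion $\delta(x+h)-\delta(x)\le\langle v,h\rangle+O(|h|^{2})$; the matching lower bound comes from the continuity of the nearest-point map $N$ at $x$, which I would establish by a compactness argument: if $y_{h}$ is a nearest point to $x+h$, then the $y_{h}$ remain bounded and any subsequential limit is a nearest point to $x$, hence equals $y$ by uniqueness, so $y_{h}\to y$; feeding this back into $\delta(x+h)=|x+h-y_{h}|$ yields $\delta(x+h)-\delta(x)\ge\langle v,h\rangle+o(|h|)$, and differentiability follows.

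For part (ii), the key point is that $\Sigma(\Omega)$ is precisely the non-differentiability set of the Lipschitz function $\delta$, so Rademacher's theorem immediately gives $|\Sigma(\Omega)|=0$; equivalently, by part (i) this set coincides with the locus where the nearest point fails to be unique, which is again null. The $C^{2,1}$ regularity of $\partial\Omega$ is not needed for the measure-zero statement itself --- Lipschitz continuity suffices --- but it is what allows one (as in the cited reference) to identify $\Sigma(\Omega)$ with the interior ridge and to obtain the finer structural and regularity properties of $\delta$ near the boundary.

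For part (iii), I would exploit convexity to show $\delta$ is concave on $\Omega$. Writing the convex set as an intersection of its supporting half-spaces $\Omega=\bigcap_{\beta}\{x:\langle n_{\beta},x\rangle\le c_{\beta}\}$ with $|n_{\beta}|=1$, the identity $\dist(x,\partial\Omega)=\dist(x,\Omega^{c})$ together with $\Omega^{c}=\bigcup_{\beta}\{x:\langle n_{\beta},x\rangle>c_{\beta}\}$ gives $\delta(x)=\inf_{\beta}\,(c_{\beta}-\langle n_{\beta},x\rangle)$, an infimum of affine functions and hence concave. A concave function has negative semidefinite distributional Hessian, so taking the trace yields $\Delta\delta\le0$ in $\mathcal{D}'(\Omega)$, i.e. $\int_{\Omega}\delta\,\Delta\varphi\,dx\le0$ for every nonnegative $\varphi\in C_{0}^{\infty}(\Omega)$. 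The main obstacle is the lower estimate in the converse of part (i): everything else reduces to Lipschitz continuity (for (ii)) and to the elementary concavity computation (for (iii)), whereas that step genuinely requires controlling the nearest-point projection through a compactness argument and is where uniqueness of the nearest point is essential.
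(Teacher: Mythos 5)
Your proposal is correct, but there is nothing in the paper to compare it against: the paper states this lemma with a citation to \cite{j} (Balinsky--Evans--Lewis) and gives no proof of its own, so what you have done is reconstruct the standard arguments from the cited reference, and you have done so accurately. In part (i), the forward direction via the interior maximum of $z\mapsto\delta(z)-|z-y|$ is sound (note $|x-y|=\delta(x)>0$ because $\Omega$ is open, so $|z-y|$ is smooth at $x$), and it both identifies $\nabla\delta(x)=\frac{x-y}{|x-y|}$ and forces uniqueness of the nearest point. For the converse, your compactness argument for the continuity $y_h\to y$ of the nearest-point map is exactly the essential ingredient; you correctly isolate it as the one step where uniqueness is used. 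Incidentally, the lower bound there can be made exact rather than $o(|h|)$-approximate: by Cauchy--Schwarz, $|a+h|\geq |a|+\bigl\langle \frac{a}{|a|},h\bigr\rangle$ with $a=x-y_h$, so the only limiting step is $\frac{x-y_h}{|x-y_h|}\to v$. Your observation in (ii) that Rademacher's theorem applied to the $1$-Lipschitz function $\delta$ already yields $|\Sigma(\Omega)|=0$ for any open set with nonempty boundary --- so that the $C^{2,1}$ hypothesis is superfluous for the bare measure-zero claim and only matters for finer structure (e.g. nullity of the closure of the ridge, or regularity of $\delta$ near $\partial\Omega$) --- is correct.

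One small slip in (iii) is worth fixing, though it does not break the argument: $\bigcup_{\beta}\{x:\langle n_{\beta},x\rangle>c_{\beta}\}$ is the complement of $\overline{\Omega}$, not of $\Omega$. This is harmless because for $x\in\Omega$ one has $\dist(x,\partial\Omega)=\dist(x,\Omega^{c})=\dist(x,\overline{\Omega}^{c})$: every boundary point of a convex domain is a limit of exterior points (push along the outward normal of a supporting hyperplane at that point). With that repaired, your identity $\delta(x)=\inf_{\beta}\,(c_{\beta}-\langle n_{\beta},x\rangle)$ exhibits $\delta$ as an infimum of affine functions, hence concave, and the passage to $\int_{\Omega}\delta\,\Delta\varphi\,dx\leq 0$ for $0\leq\varphi\in C_{0}^{\infty}(\Omega)$ follows from the nonpositivity of second difference quotients of a concave function tested against $\varphi$. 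So all three parts are complete; your route is the same as the textbook source the paper leans on, merely written out in full where the paper simply cites.
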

\centerline{$\int_{\Omega}\delta(x)\Delta{\varphi}(x)dx\leq{0}, \varphi\in{C}_{0}^{\infty}(\Omega), \varphi\geq{0}.$}
For $\Omega\subset\mathbb{R}^{N}$, if for all $x\in{\Omega}$, $g\in{G}$, we have $gx\in{\Omega}$, then $\Omega$ is called a G-invariant domain.
\begin{lemma}\label{lm-2}
If  $\Omega\subset\mathbb{R}^{N}$ is G-invariant, $g\in{G}$, $x\in{\Omega}\setminus\Sigma(\Omega)$, then
\begin{equation}
(\nabla{\delta}\circ{g})(x)=(g\circ{\nabla{\delta}})(x).
\end{equation}
\end{lemma}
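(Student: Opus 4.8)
The plan is to reduce the claim to an equivariance property of the nearest-point map under the action of $G$, and then to read off the gradient from the explicit formula in Lemma~3.1(i). The starting point is that every $g\in{G}$ is a composition of reflections $\sigma_{\alpha}$, hence an orthogonal linear map; in particular $g$ is an isometry of $\mathbb{R}^{N}$ fixing the origin with $|gv|=|v|$ for all $v$, and $g$ is a homeomorphism. Since $\Omega$ is $G$-invariant we have $g\Omega=\Omega$, and therefore $g(\partial\Omega)=\partial(g\Omega)=\partial\Omega$; in the same way $gx\in\Omega$ whenever $x\in\Omega$.

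First I would record that $\delta$ is $G$-invariant: for any $x\in\Omega$, since $g$ is an isometry and $z\mapsto g^{-1}z$ permutes $\partial\Omega$, one gets $\delta(gx)=\dist(gx,\partial\Omega)=\dist(x,\partial\Omega)=\delta(x)$. Next, fix $x\in\Omega\setminus\Sigma(\Omega)$. By Lemma~3.1(i) there is a unique point $y=N(x)\in\partial\Omega$ with $\delta(x)=|x-y|$, and $\nabla{\delta}(x)=\frac{x-y}{|x-y|}$. The key step is to show that $gy$ is the \emph{unique} nearest boundary point to $gx$. For every $z\in\partial\Omega$ one has $|gx-z|=|g(x-g^{-1}z)|=|x-g^{-1}z|$, and as $z$ ranges over $\partial\Omega$ so does $g^{-1}z$; thus the bijection $z\mapsto g^{-1}z$ identifies the minimizers of $\dist(gx,\cdot)$ on $\partial\Omega$ with those of $\dist(x,\cdot)$. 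Uniqueness of $y$ for $x$ therefore forces $gy$ to be the unique minimizer for $gx$, so by Lemma~3.1(i) again $\delta$ is differentiable at $gx$ (hence $gx\in\Omega\setminus\Sigma(\Omega)$) with $N(gx)=gy$.

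Finally I would combine these facts and use linearity of $g$: since $|gx-gy|=|g(x-y)|=|x-y|$, the formula of Lemma~3.1(i) gives $\nabla{\delta}(gx)=\frac{gx-gy}{|gx-gy|}=\frac{g(x-y)}{|x-y|}=g\left(\frac{x-y}{|x-y|}\right)=g\bigl(\nabla{\delta}(x)\bigr)$, which in the notation of the statement is precisely $(\nabla{\delta}\circ{g})(x)=(g\circ{\nabla{\delta}})(x)$.

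The only genuinely delicate point is the middle step, namely transferring differentiability of $\delta$ from $x$ to $gx$. This cannot be obtained from the chain rule, because $\delta$ is in general merely Lipschitz and not $C^{1}$; instead I would route it entirely through the geometric criterion of Lemma~3.1(i) (differentiability $\Longleftrightarrow$ uniqueness of the nearest point), which is why the argument is phrased in terms of minimizers of the distance rather than any smoothness of $\delta$. Everything else follows directly from $g$ being a linear isometry that preserves $\partial\Omega$.
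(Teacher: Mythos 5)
Your proof is correct and follows essentially the same route as the paper's: use the $G$-invariance of $\delta$ and the equivariance of the nearest-point map to conclude $N(gx)=gy$, then read off the gradient from the formula $\nabla\delta(x)=\frac{x-N(x)}{|x-N(x)|}$ of Lemma~3.1(i). Your treatment is in fact slightly more careful than the paper's, since you explicitly verify (via the bijection $z\mapsto g^{-1}z$ on $\partial\Omega$) that $gx\in\Omega\setminus\Sigma(\Omega)$, a point the paper passes over by simply invoking uniqueness.
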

\begin{proof}
From the proof of Theorem 5.2 in [8], the function $\delta(x)$ is G-invariant. For any $x\in{\Omega}$, we have
$y=N(x)\in\partial{\Omega}$ , $\delta{(x)}=|x-y|$ and
\begin{equation*}
\delta{(gx)}=\delta(x)=|x-y|=|g(x-y)|=|gx-gy|.
\end{equation*}
Due to the uniqueness of $N(x)$, we get that $N(gx)=gy$. Therefore
\begin{equation*}
\nabla{\delta{(gx)}}=\frac{gx-gy}{|gx-gy|}=\frac{g(x-y)}{|x-y|}=g({\nabla{\delta(x)}}).
\end{equation*}
\end{proof}
\begin{remark}
{}If  $F=h_{1}x+h_{2}\nabla{\delta}$, where $h_{1}$, $h_{2}$ are G-invariant functions, then by Lemma \ref{lm-2} we have that
$\langle\alpha,F(\sigma_{\alpha}x)\rangle=-\langle\alpha,F\rangle$.
\end{remark}
\begin{theorem}\label{T-3-1}
Let $\Omega\subset\mathbb{R}^{N}$ be a G-invariant domain with $|\Sigma(\Omega)|=0$. Then for all $u\in C^{\infty}_{0}(\Omega)$ we have the inequality
\begin{equation}
\begin{split}
\int_{\Omega}|\nabla_{k}u|^{2}d\mu_{k}\geq&\left(\frac{p-1}{p}\right)^{p}\int_{\Omega}\frac{{|u|}^{p}}{{\delta}^{p}}d\mu_{k} \\
    &+\left(\frac{p-1}{p}\right)^{p-1}\int_{\Omega}\left[-\Delta{\delta}+(\frac{p}{2}-1)\langle\rho,{\nabla\delta}\rangle-\frac{p}{2}|\langle\rho,{\nabla\delta}\rangle|\right]\frac{{|u|}^{p}}{{\delta}^{p-1}}d\mu_{k},
\end{split}
\end{equation}
where $\rho:=2\sum\limits_{\alpha\in{{R}_{+}}}k_{\alpha}\frac{\alpha}{\langle\alpha,x\rangle}$.
\end{theorem}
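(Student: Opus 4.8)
The plan is to use the vector-field (divergence) method, adapted to the Dunkl setting via the integration-by-parts formula $\int_{\mathbb{R}^{N}}T_{i}(w)v\,d\mu_{k}=-\int_{\mathbb{R}^{N}}wT_{i}(v)\,d\mu_{k}$. I would test the energy against the single $G$-invariant vector field
\[
F=-\left(\frac{p-1}{p}\right)^{p-1}\frac{\nabla\delta}{\delta^{p-1}},\qquad c:=\left(\frac{p-1}{p}\right)^{p-1}.
\]
Writing $\mathrm{div}_{k}F:=\sum_{i}T_{i}F_{i}$, the exponent $p-1$ and the prefactor $c$ are forced: the coefficient multiplying $\int_{\Omega}|u|^{p}\delta^{-p}\,d\mu_{k}$ will reduce to $(p-1)\bigl(c-c^{p/(p-1)}\bigr)$, and this is maximised exactly at $c=(\frac{p-1}{p})^{p-1}$, giving the sharp value $(\frac{p-1}{p})^{p}$.

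First I would compute $\mathrm{div}_{k}F$ explicitly. Since $\delta$ is $G$-invariant and $|\nabla\delta|=1$ by Lemma 3.1(i), the Euclidean part is $\mathrm{div}\,F=-c\bigl(\Delta\delta\,\delta^{-(p-1)}-(p-1)\delta^{-p}\bigr)$. For the reflection part of each $T_{i}$, Remark 3.3 gives $\langle\alpha,F(\sigma_{\alpha}x)\rangle=-\langle\alpha,F(x)\rangle$, so $\sum_{i}\sum_{\alpha}k_{\alpha}\alpha_{i}\frac{F_{i}(x)-F_{i}(\sigma_{\alpha}x)}{\langle\alpha,x\rangle}=\langle\rho,F\rangle=-c\,\langle\rho,\nabla\delta\rangle\,\delta^{-(p-1)}$. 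Hence
\[
\mathrm{div}_{k}F=-c\left[\frac{\Delta\delta}{\delta^{p-1}}-\frac{p-1}{\delta^{p}}+\frac{\langle\rho,\nabla\delta\rangle}{\delta^{p-1}}\right].
\]
Then I would use $\int_{\Omega}(\mathrm{div}_{k}F)|u|^{p}\,d\mu_{k}=-\int_{\Omega}\langle F,\nabla_{k}(|u|^{p})\rangle\,d\mu_{k}$. Splitting $\nabla_{k}(|u|^{p})$ into its local part $p|u|^{p-2}u\,\nabla u$ and its reflection part, the reflection part pairs with $F$ to give an integrand that is \emph{antisymmetric} under $x\mapsto\sigma_{\alpha}x$ (because $\langle\alpha,F\rangle$, $\langle\alpha,x\rangle$ and $|u(x)|^{p}-|u(\sigma_{\alpha}x)|^{p}$ each change sign while $d\mu_{k}$ and $\delta$ are invariant), so it integrates to zero. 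In the local term I rewrite $\langle F,\nabla u\rangle=\langle F,\nabla_{k}u\rangle-\sum_{\alpha}k_{\alpha}\langle\alpha,F\rangle\frac{u(x)-u(\sigma_{\alpha}x)}{\langle\alpha,x\rangle}$, so that the Dunkl gradient appears. Young's inequality in the form $-p|u|^{p-2}u\langle F,\nabla_{k}u\rangle\le|\nabla_{k}u|^{p}+(p-1)|F|^{p/(p-1)}|u|^{p}$ produces $\int_{\Omega}|\nabla_{k}u|^{p}\,d\mu_{k}$ with coefficient one and the term $(p-1)c^{p/(p-1)}\int|u|^{p}\delta^{-p}$; together with the $\mathrm{div}_{k}F$ contributions this assembles the sharp constant $(\frac{p-1}{p})^{p}$ and the $-\Delta\delta$ remainder.

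The main obstacle is the leftover genuinely non-local term $B_{2}=p\int_{\Omega}|u|^{p-2}u\sum_{\alpha}k_{\alpha}\langle\alpha,F\rangle\frac{u(x)-u(\sigma_{\alpha}x)}{\langle\alpha,x\rangle}\,d\mu_{k}$, which is the source of the $\langle\rho,\nabla\delta\rangle$-dependent remainder. I would symmetrise it under each $\sigma_{\alpha}$, using the $G$-invariance of $\mu_{k}$ together with $\nabla\delta(\sigma_{\alpha}x)=\sigma_{\alpha}\nabla\delta(x)$ from Lemma \ref{lm-2}; setting $a=u(x)$, $b=u(\sigma_{\alpha}x)$ and $s_{\alpha}=\langle\alpha,\nabla\delta\rangle/\langle\alpha,x\rangle$ (a $\sigma_{\alpha}$-invariant scalar), and combining with the $\langle\rho,\nabla\delta\rangle\delta^{-(p-1)}$ term already present in $\mathrm{div}_{k}F$, the $\alpha$-summand collapses to $c\,k_{\alpha}s_{\alpha}\delta^{-(p-1)}\bigl[(\tfrac p2-1)(|a|^{p}+|b|^{p})-\tfrac p2\,ab(|a|^{p-2}+|b|^{p-2})\bigr]$. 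The sign of $s_{\alpha}$ is not controlled, so the decisive elementary estimate is $|a|^{p}+|b|^{p}\ge|a|^{p-1}|b|+|a|\,|b|^{p-1}$ (equivalently $(|a|^{p-1}-|b|^{p-1})(|a|-|b|)\ge0$) combined with $s_{\alpha}ab\le|s_{\alpha}|\,|ab|$, which bounds this summand below by $c\,k_{\alpha}\delta^{-(p-1)}\bigl[(\tfrac p2-1)s_{\alpha}-\tfrac p2|s_{\alpha}|\bigr](|a|^{p}+|b|^{p})$. Summing over $\alpha$ and desymmetrising recovers the two remainder pieces $(\tfrac p2-1)\langle\rho,\nabla\delta\rangle$ and $-\tfrac p2|\langle\rho,\nabla\delta\rangle|$, the absolute value arising precisely from the uncontrolled sign of $s_{\alpha}$. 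I expect this final step to require the most care, both for pinning down the constants and for justifying the computations off the negligible set $\Sigma(\Omega)$, which is harmless since $|\Sigma(\Omega)|=0$.
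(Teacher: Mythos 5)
Your proposal is correct and takes essentially the same route as the paper's own proof: the paper likewise tests against $F=-\nabla\delta/\delta^{p-1}$ (carrying the constant as a Young-inequality parameter $\epsilon$ rather than folding $c=(\frac{p-1}{p})^{p-1}$ into $F$), kills the $|u(x)|^{p}-|u(\sigma_{\alpha}x)|^{p}$ term by the same change of variables $x\mapsto\sigma_{\alpha}x$ (computing the Jacobian $J=-1$ explicitly), splits $\nabla u=\nabla_{k}u-(\text{reflection part})$, and extracts $(\frac{p-1}{p})^{p}$ from the identical optimization. Your only real deviation is in the non-local cross term, where your pointwise symmetrization plus $|a|^{p}+|b|^{p}\ge|a|^{p-1}|b|+|a|\,|b|^{p-1}$ is just the two-point form of the paper's integral H\"older estimate with the $\sigma_{\alpha}$-invariant weight $|\langle\rho,\nabla\delta\rangle|\delta^{-(p-1)}d\mu_{k}$; note only that your per-root bookkeeping with $s_{\alpha}=\langle\alpha,\nabla\delta\rangle/\langle\alpha,x\rangle$ honestly produces the remainder coefficient $-p\sum_{\alpha}k_{\alpha}|s_{\alpha}|\le-\frac{p}{2}|\langle\rho,\nabla\delta\rangle|$, so recovering the stated $-\frac{p}{2}|\langle\rho,\nabla\delta\rangle|$ needs the $s_{\alpha}$ to share a sign --- exactly the same slack hidden in the paper's single-$\sigma_{\alpha}$ H\"older notation, and harmless in the applications (e.g.\ $\Omega=B(0,r)^{c}$, where $s_{\alpha}=1/|x|\ge0$).
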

\begin{proof}
If $F$ satisfies that $\langle\alpha,F(\sigma_{\alpha}x)\rangle=-\langle\alpha,F\rangle$, then
\begin{equation}\begin{split}\label{T-3-1-1}
  \int_{\Omega}(\nabla_{k}\cdot{F}){|u|}^{p} d\mu_{k} &=-\int_{\Omega}F\cdot\nabla_{k}({|u|}^{p})d\mu_{k}  \\
&=-\int_{\Omega}F\cdot\nabla({|u|}^{p})d\mu_{k}-\int_{\Omega}\sum\limits_{\alpha\in{{R}_{+}}}k_{\alpha}\frac{{|u|}^{p}-{|u(\sigma_{\alpha}x)|}^{p}}{\langle\alpha,x\rangle}\langle\alpha,F\rangle d\mu_{k}.
\end{split}
\end{equation}
Let $x=\sigma_{\alpha}y$,
\begin{equation}
\int_{\Omega}{|u|}^{p}\frac{\langle\alpha,F\rangle}{\langle\alpha,x\rangle}d\mu_{k}=\int_{\Omega}{|u(\sigma_{\alpha}y)|}^{p}\frac{\langle\alpha,F(\sigma_{\alpha}y)\rangle}{\langle\alpha,\sigma_{\alpha}y\rangle}d\mu_{k}(\sigma_{\alpha}y).
\end{equation}
Because of $\langle\alpha,F(\sigma_{\alpha}y)\rangle=-\langle\alpha,F(y)\rangle$, $\langle\alpha,\sigma_{\alpha}y\rangle=-\langle\alpha,y\rangle$,
$d\mu_{k}(\sigma_{\alpha}y)=\omega_{k}(\sigma_{\alpha}y)d(\sigma_{\alpha}y)=\omega_{k}(y)|J|dy$,
where
\begin{equation*}
J=\left|\frac{\partial(\sigma_{\alpha}y)}{\partial(y)}\right|=
\left|
\begin{array}{ccccc}
1-{\alpha^{2}_{1}}&-\alpha_{1}\alpha_{2}&-\alpha_{1}\alpha_{3}&\cdots&-\alpha_{1}\alpha_{n} \\
-\alpha_{2}\alpha_{1}&1-{\alpha^{2}_{2}}&-\alpha_{2}\alpha_{3}&\cdots&-\alpha_{2}\alpha_{n} \\
-\alpha_{3}\alpha_{1}&-\alpha_{3}\alpha_{2}&1-{\alpha^{2}_{3}}&\cdots&-\alpha_{3}\alpha_{n} \\
\vdots&\vdots&\vdots&\ddots&\vdots\\
-\alpha_{n}\alpha_{1}&-\alpha_{n}\alpha_{2}&-\alpha_{n}\alpha_{3}&\cdots&1-{\alpha^{2}_{n}}
\end{array}
\right|.
\end{equation*}
Straightforward calculation shows that $J=-1$. \\
Thus $d\mu_{k}(\sigma_{\alpha}y)=d\mu_{k}(y)$, then
\begin{equation}\label{T-3-1-2}
\int_{\Omega}{|u|}^{p}
{\frac{\langle\alpha,F\rangle}{\langle\alpha,x\rangle}}d\mu_{k}=\int_{\Omega}{|u(\sigma_{\alpha}y)|}^{p}\frac{\langle\alpha,F(y)\rangle}{\langle\alpha,y\rangle}d\mu_{k}(y).
\end{equation}
Putting (\ref{T-3-1-2}) into (\ref{T-3-1-1}), we get
\begin{equation}
\begin{split}
\int_{\Omega}(\nabla_{k}\cdot{F}){|u|}^{p} d\mu_{k}=&-\int_{\Omega}p{|u|}^{p-2}u(F\cdot\nabla{u})d\mu_{k} \\
=&-\int_{\Omega}p{|u|}^{p-2}u\left((F\cdot\nabla_{k}u)-\sum\limits_{\alpha\in{{R}_{+}}}k_{\alpha}\frac{u(x)-u(\sigma_{\alpha}x)}{\langle\alpha,x\rangle}\langle\alpha,F\rangle\right)d\mu_{k} \\
=&-\int_{\Omega}p{|u|}^{p-2}u(F\cdot{\nabla_{k}u}) d\mu_{k}+p\sum\limits_{\alpha\in{{R}_{+}}}k_{\alpha}\int_{\Omega}\frac{\langle\alpha,F\rangle}{\langle\alpha,x\rangle}{|u|^{p}}d\mu_{k} \\
&-p\sum\limits_{\alpha\in{{R}_{+}}}k_{\alpha}\int_{\Omega}\frac{\langle\alpha,F\rangle}{\langle\alpha,x\rangle}{|u|^{p-2}}u\cdot{u(\sigma_{\alpha}x)}d\mu_{k}\\
\leq&{p}\left(\frac{p-1}{p}\epsilon^{-\frac{p}{p-1}}\int_{\Omega}{|F|}^{\frac{p}{p-1}}|u|^{p}d\mu_{k}+\frac{\epsilon^{p}}{p}\int_{\Omega}|\nabla_{k}u|^{p}d\mu_{k}\right) \\
&+\frac{1}{2}p\int_{\Omega}\langle\rho,F\rangle{|u|^{p}} d\mu_{k}-\frac{1}{2}p\int_{\Omega}\langle\rho,F\rangle{|u|^{p-2}}u\cdot{u(\sigma_{\alpha}x)}d\mu_{k},
\end{split}
\end{equation}
we used $H\ddot{o}lder$ inequality and Young inequality in the last inequality above. Then,
\begin{equation}
\begin{split}\label{T-3-1-3}
\int_{\Omega}\epsilon^{p}{|\nabla_{k}u|}^{p}d\mu_{k}\geq&\int_{\Omega}\left(\nabla_{k}\cdot{F}-(p-1){\epsilon^{-\frac{p}{p-1}}}{|F|}^{\frac{p}{p-1}}-\frac{1}{2}p\langle\rho,F\rangle\right){|u|}^{p}d\mu_{k} \\
&+\frac{1}{2}p\int_{\Omega}\langle\rho,F\rangle{|u|^{p-2}}u\cdot{u(\sigma_{\alpha}x)}d\mu_{k}.
\end{split}
\end{equation}

Let $F=-\frac{\nabla{\delta}}{{\delta}^{p-1}}$. Since $\delta$ is G invariant,  $\nabla_{k}{\delta}=\nabla\delta$,
thus $\nabla_{k}\cdot{F}=-\frac{\Delta_{k}\delta}{\delta^{p-1}}+(p-1)\frac{{|\nabla{\delta}|}^{2}}{\delta^{p}}$.
By (\ref{T-3-1-3}), we have

\begin{equation}
\begin{split}\label{T-3-1-4}
\int_{\Omega}{|\nabla_{k}u|}^{p}d\mu_{k}\geq\left(\frac{p-1}{{\epsilon}^{p}}-\frac{p-1}{{\epsilon}^{p+\frac{p}{p-1}}}\right)\int_{\Omega}\frac{|u|^{p}}{\delta^{p}}d\mu_{k} &+\frac{1}{\epsilon^{p}}\int_{\Omega}\left(-\Delta_{k}\delta+\frac{p}{2}\langle\rho,\nabla{\delta}\rangle\right)\frac{|u|^{p}}{\delta^{p-1}}d\mu_{k} \\
&-\frac{p}{2\epsilon^{p}}\int_{\Omega}\langle\rho,\nabla\delta\rangle\frac{{|u|^{p-2}}u\cdot{u(\sigma_{\alpha}x)}}{\delta^{p-1}}d\mu_{k} \\
\geq\left(\frac{p-1}{{\epsilon}^{p}}-\frac{p-1}{{\epsilon}^{p+\frac{p}{p-1}}}\right)\int_{\Omega}\frac{|u|^{p}}{\delta^{p}}d\mu_{k}+\frac{1}{\epsilon^{p}}\int_{\Omega}&\left(-\Delta_{k}\delta+\frac{p}{2}\langle\rho,\nabla{\delta}\rangle-\frac{p}{2}|\langle\rho,\nabla{\delta}\rangle|\right)\frac{|u|^{p}}{\delta^{p-1}}d\mu_{k}.   \end{split}
\end{equation}
The last inequality above is obtained by using $H\ddot{o}lder$ inequality
\begin{equation*}
\begin{split}
\int_{\Omega}\frac{\langle\rho,\nabla\delta\rangle}{\delta^{p-1}}{|u|^{p-2}}u\cdot{u(\sigma_{\alpha}x)}d\mu_{k}
&
\leq\left(\int_{\Omega}\frac{|\langle\rho,\nabla\delta\rangle|}{\delta^{p-1}}{|u|^{p}}d\mu_{k}\right)^{\frac{p-1}{p}}\left(\int_{\Omega}\frac{|\langle\rho,\nabla\delta\rangle|}{\delta^{p-1}}{|u(\sigma_{\alpha}x)|^{p}}d\mu_{k}\right)^{\frac{1}{p}}
\\
&=\int_{\Omega}\frac{|\langle\rho,\nabla\delta\rangle|}{\delta^{p-1}}{|u|^{p}}d\mu_{k}.
\end{split}
\end{equation*}
The $\frac{p-1}{\epsilon^{p}}-\frac{p-1}{\epsilon^{p+\frac{p}{p-1}}}$ takes the maximum value $(\frac{p-1}{p})^{p}$ when
$\epsilon=({\frac{p}{p-1}})^{\frac{p-1}{p}}$. Also,
\begin{equation*}
-\Delta_{k}\delta+\frac{p}{2}\langle\rho,\nabla\delta\rangle=-\Delta\delta+(\frac{p}{2}-1)\langle\rho,\nabla\delta\rangle,
\end{equation*}
we thus completed the proof of Theorem \ref{T-3-1}.
\end{proof}

\begin{remark}
If the root system $\tilde{R}$ satisfies $span(\tilde{R})\subset{{\mathbb{R}}^{N-1}}$. Then the following inequality holds for any $u\in C^{\infty}_{0}({\mathbb{R}}^{N-1}\times{{\mathbb{R}}_{+}})$,

\begin{equation*}
\int_{{\mathbb{R}}^{N-1}\times{{\mathbb{R}}_{+}}}|\nabla_{k}u|^{p}d\mu_{k}\geq
\left(\frac{p-1}{p}\right)^{p}\int_{{\mathbb{R}}^{N-1}\times{{\mathbb{R}}_{+}}}\frac{|u|^{p}}{{x^{p}_{n}}}d\mu_{k}.
\end{equation*}
\end{remark}

Let $S_{N}$ denote the symmetric group in N elements. A root system of $S_{N}$ is given by $R=\{\pm(e_{i}-e_{j}),1\leq i<j\leq N\}$, and
\begin{equation*}
(span(R))^{\perp}=e_{1}+\cdots+e_{N}=:\eta,
\end{equation*}
see \cite{j} for more details. Let the domain $\Omega=span(R)\times{\eta_{+}}$, where $\eta_{+}$ is the positive direction of the straight line coincide with $\eta$. Then $\Omega$ is G invariant, $\delta(x)=dist(x,span{R})$, and
\begin{equation*}
\nabla\delta=\frac{e_{1}+\cdots+e_{N}}{|e_{1}+\cdots+e_{N}|}=\frac{\eta}{\sqrt{N}}.
\end{equation*}
Fix $R_{+}=\{e_{i}-e_{j},1\leq i<j\leq N\}$, then $-\Delta\delta=0$ and $\langle\rho,\nabla\delta\rangle=0$,
by Theorem \ref{T-3-1}, we have:

\begin{corollary}
For $R=\{\pm(e_{i}-e_{j}),1\leq i<j\leq N\}$, $u\in C^{\infty}_{0}({span(R)}\times{\eta_{+}})$, the following inequality holds
\begin{equation*}
\int_{{span(R)}\times{\eta_{+}}}\left|\nabla u+k\sum\limits_{1\leq i<j\leq N}\frac{u(x)-u(\widetilde{x}_{ij})}{x_{i}-x_{j}}(e_{i}-e_{j})\right|^{p}d\mu_{k}\geq
\left(\frac{p-1}{p}\right)^{p}\int_{span(R)\times{\eta_{+}}}\frac{|u|^{p}}{{\delta^{p}}}d\mu_{k},
\end{equation*}
where $k=k_{\alpha}=k_{\beta}, \forall\alpha,\beta\in{R}$, $\widetilde{x}_{ij}=(x_{1},\cdots,x_{i-1},x_{j},x_{i+1},\cdots,x_{j-1},x_{i},x_{j+1},\cdots,x_{N})$.
\end{corollary}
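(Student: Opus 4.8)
The plan is to read this Corollary as the specialization of Theorem~\ref{T-3-1} to the symmetric group $S_N$, so that the whole task reduces to evaluating the geometric data attached to $\Omega = span(R)\times\eta_{+}$ and to rewriting the Dunkl gradient in closed form. First I would record that $\Omega$ is $G$-invariant, as noted in the set-up preceding the statement, and that $span(R) = \{x : \sum_i x_i = 0\}$ is a hyperplane whose orthogonal complement is the line through $\eta = e_1+\cdots+e_N$. Thus on the open half-space $\Omega$ one has $\delta(x) = dist(x, span(R)) = \langle x,\eta\rangle/\sqrt{N}$, which is smooth throughout $\Omega$; in particular $\Sigma(\Omega)=\emptyset$, so $|\Sigma(\Omega)|=0$ and the hypotheses of Theorem~\ref{T-3-1} are met.

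Second, I would check that the remainder integral in Theorem~\ref{T-3-1} vanishes identically. Since $\delta$ is affine on $\Omega$, we get $\Delta\delta = 0$, hence $-\Delta\delta = 0$. For the other contribution, inserting $\nabla\delta = \eta/\sqrt{N}$ and $\rho = 2\sum_{\alpha\in R_{+}} k_\alpha\, \alpha/\langle\alpha,x\rangle$ gives
\[
\langle\rho,\nabla\delta\rangle = \frac{2}{\sqrt{N}}\sum_{\alpha\in R_{+}} k_\alpha\,\frac{\langle\alpha,\eta\rangle}{\langle\alpha,x\rangle},
\]
and each root $\alpha = e_i-e_j$ satisfies $\langle\alpha,\eta\rangle = 0$, so $\langle\rho,\nabla\delta\rangle = 0$. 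Therefore the bracket $-\Delta\delta + (\frac{p}{2}-1)\langle\rho,\nabla\delta\rangle - \frac{p}{2}|\langle\rho,\nabla\delta\rangle|$ is zero, and Theorem~\ref{T-3-1} collapses to $\int_\Omega |\nabla_k u|^{p}\,d\mu_k \ge (\frac{p-1}{p})^{p}\int_\Omega |u|^{p}/\delta^{p}\,d\mu_k$, which already carries the asserted sharp constant.

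Third, I would expand the left-hand side to match the statement. Writing $\nabla_k u = \sum_i (T_i u)\,e_i$ and collecting the reflection parts over the index $i$ yields $\nabla u + \sum_{\alpha\in R_{+}} k_\alpha \frac{u(x)-u(\sigma_\alpha x)}{\langle\alpha,x\rangle}\,\alpha$. For $S_N$ we substitute $\alpha = e_i - e_j$, $k_\alpha = k$, and $\langle\alpha,x\rangle = x_i - x_j$, and a one-line computation from the definition of $\sigma_\alpha$ shows $\sigma_\alpha x = \widetilde{x}_{ij}$, the transposition of the $i$-th and $j$-th coordinates. This reproduces exactly the vector $\nabla u + k\sum_{1\le i<j\le N}\frac{u(x)-u(\widetilde{x}_{ij})}{x_i-x_j}(e_i-e_j)$ appearing inside $|\cdot|^{p}$, completing the identification.

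I do not expect a genuine obstacle, since the analytic weight of the argument is borne entirely by Theorem~\ref{T-3-1}, whose proof already verifies, via the Remark following Lemma~\ref{lm-2}, that $F = -\nabla\delta/\delta^{p-1}$ obeys the required symmetry $\langle\alpha,F(\sigma_\alpha x)\rangle = -\langle\alpha,F\rangle$. The only step demanding real attention is the vanishing of the remainder, and that rests on the single orthogonality relation $\langle e_i-e_j,\eta\rangle = 0$; it is precisely this feature of the root system of $S_N$ that leaves the constant $(\frac{p-1}{p})^{p}$ untouched.
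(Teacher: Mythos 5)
Your proposal is correct and follows essentially the same route as the paper: the paragraph preceding the corollary performs exactly your reduction to Theorem~3.1 on $\Omega=span(R)\times\eta_{+}$ with the remainder killed by $-\Delta\delta=0$ and $\langle\rho,\nabla\delta\rangle=0$ (each root $e_i-e_j$ being orthogonal to $\eta$), while the paper's short proof supplies only the remaining identifications $\sigma_{e_i-e_j}x=\widetilde{x}_{ij}$ and the constancy of $k_\alpha$ (via the single conjugacy class of roots under $S_N$), which you also verify.
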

\begin{proof}
 It is easy to prove  $v\circ\sigma_{\alpha}\circ v^{-1}=\sigma_{v\alpha}$, for all $v\in G$, As there is one conjugate class in $R$, so $k_{\alpha}=k_{\beta}$, for all $\alpha,\beta \in R$, see also \cite{i}. Straightforward computation shows $\sigma_{e_{i}-e_{j}}(x)=\widetilde{x}_{ij}$.
\end{proof}

By inequality (\ref{T-3-1-4}) in the proof of Theorem \ref{T-3-1}, it is easy to see that the following theorem holds.
\begin{theorem}\label{T-3-2}
If $\Omega\subset\mathbb{R}^{N}$ satisfies $|\Sigma(\Omega)|=0$, $\langle\rho,\nabla\delta\rangle\geq0$. The following inequality holds for all $u\in C^{\infty}_{0}(\Omega)$,
\begin{equation}
\int_{\Omega}|\nabla_{k}u|^{p}d\mu_{k}\geq(p-1)\left(\epsilon^{-p}-\epsilon^{-\frac{p^{2}}{p-1}}\right)\int_{\Omega}\frac{|u|^{p}}{\delta^{p}}d\mu_{k}-\epsilon^{-p}\int_{\Omega}\Delta_{k}\delta\frac{|u|^{p}}{\delta^{p-1}}d\mu_{k},
\end{equation}
where $\epsilon$ is a positive constant.
\end{theorem}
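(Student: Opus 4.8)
The plan is to read the assertion straight off inequality (\ref{T-3-1-4}), which was already established in the proof of Theorem \ref{T-3-1} with the choice $F=-\nabla\delta/\delta^{p-1}$; no new integration by parts is needed. Recall that for every positive $\epsilon$ and every $u\in C^{\infty}_{0}(\Omega)$ that inequality reads
\begin{equation*}
\int_{\Omega}|\nabla_{k}u|^{p}d\mu_{k}\geq\Big(\frac{p-1}{\epsilon^{p}}-\frac{p-1}{\epsilon^{\,p+\frac{p}{p-1}}}\Big)\int_{\Omega}\frac{|u|^{p}}{\delta^{p}}d\mu_{k}+\frac{1}{\epsilon^{p}}\int_{\Omega}\Big(-\Delta_{k}\delta+\frac{p}{2}\langle\rho,\nabla\delta\rangle-\frac{p}{2}|\langle\rho,\nabla\delta\rangle|\Big)\frac{|u|^{p}}{\delta^{p-1}}d\mu_{k}.
\end{equation*}
So the first step is simply to quote this as the starting point, noting that here $\epsilon>0$ is kept free rather than optimized as it was at the end of the proof of Theorem \ref{T-3-1}.

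The second step is to feed in the extra hypothesis $\langle\rho,\nabla\delta\rangle\geq0$. Since $|\Sigma(\Omega)|=0$, the gradient $\nabla\delta$ is defined $\mu_{k}$-almost everywhere on $\Omega$, and the sign assumption gives $|\langle\rho,\nabla\delta\rangle|=\langle\rho,\nabla\delta\rangle$ a.e. Hence the bracket in the second integral collapses, $\frac{p}{2}\langle\rho,\nabla\delta\rangle-\frac{p}{2}|\langle\rho,\nabla\delta\rangle|=0$, and its integrand reduces to $-\Delta_{k}\delta\,\frac{|u|^{p}}{\delta^{p-1}}$. The third step is the trivial exponent simplification $p+\frac{p}{p-1}=\frac{p^{2}}{p-1}$, which turns the Hardy coefficient into $(p-1)\big(\epsilon^{-p}-\epsilon^{-p^{2}/(p-1)}\big)$; together with $\frac{1}{\epsilon^{p}}\int_{\Omega}(-\Delta_{k}\delta)\frac{|u|^{p}}{\delta^{p-1}}d\mu_{k}=-\epsilon^{-p}\int_{\Omega}\Delta_{k}\delta\,\frac{|u|^{p}}{\delta^{p-1}}d\mu_{k}$ this is precisely the claimed inequality.

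There is essentially no obstacle here: the entire analytic content (the reflection-symmetry computation, the Hölder–Young splitting, and the Hölder bound on the reflected cross term) was discharged in Theorem \ref{T-3-1}, and Theorem \ref{T-3-2} is a direct specialization. The one point worth flagging explicitly is the \emph{source} of the $-\frac{p}{2}|\langle\rho,\nabla\delta\rangle|$ term in (\ref{T-3-1-4}): it arose from majorizing the reflected cross term $\int_{\Omega}\langle\rho,\nabla\delta\rangle\,|u|^{p-2}u\,u(\sigma_{\alpha}x)/\delta^{p-1}\,d\mu_{k}$ by its absolute-value bound via Hölder's inequality. Under the sign hypothesis this bound coincides exactly with the term $\frac{p}{2}\langle\rho,\nabla\delta\rangle$ already present, so the two cancel with no loss, which is what yields the clean form. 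Keeping $\epsilon$ unoptimized is deliberate, since the free parameter is what allows one to absorb a \emph{positive} contribution of $\Delta_{k}\delta$ in the non-convex applications treated next.
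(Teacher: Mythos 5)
Your proposal is correct and matches the paper exactly: the paper proves Theorem \ref{T-3-2} by the same one-line observation, namely that inequality (\ref{T-3-1-4}) with $F=-\nabla\delta/\delta^{p-1}$ and the hypothesis $\langle\rho,\nabla\delta\rangle\geq0$ (which kills the $\frac{p}{2}\langle\rho,\nabla\delta\rangle-\frac{p}{2}|\langle\rho,\nabla\delta\rangle|$ term) immediately yields the claim, with $\epsilon$ left free and $p+\frac{p}{p-1}=\frac{p^{2}}{p-1}$. Your added remarks on the origin of the absolute-value term and on why $\epsilon$ is kept unoptimized are accurate and consistent with how Corollary \ref{COR-1} uses the result.
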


\begin{remark}
If a domain $\Omega$ satisfies that $|\Sigma(\Omega)|=0$, $\langle\rho,\nabla\delta\rangle\geq0$ and $\delta\Delta_{k}\delta\leq\theta<p-1$, where $\theta$ is a positive constant, i.e. then there is a positive constant $C=C(\theta, p)$ such that
\begin{equation*}
\int_{\Omega}|\nabla_{k}u|^{p}d\mu_{k}\geq C\int_{\Omega}\frac{|u|^{p}}{\delta^{p}}d\mu_{k}.
\end{equation*}
\end{remark}

\begin{corollary}\label{COR-1}
Suppose that $\Omega=B(o,r)^{c}$, $p>N+2\gamma$. The following inequality holds for all $u\in C^{\infty}_{0}(\Omega)$,
\begin{equation}
\int_{\Omega}|\nabla_{k}u|^{p}d\mu_{k}\geq\left(\frac{p-N-2\gamma}{p}\right)^{p}\int_{\Omega}\frac{|u|^{p}}{\delta^{p}}d\mu_{k}.
\end{equation}

\end{corollary}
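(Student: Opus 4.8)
The plan is to specialize Theorem \ref{T-3-2} to the exterior domain $\Omega=B(o,r)^{c}$ and then optimize the free parameter $\epsilon$. First I would record the geometry of the distance function: since the nearest boundary point of any $x\in\Omega$ is its radial projection onto the sphere $\partial B(o,r)$, the distance is given explicitly by $\delta(x)=|x|-r$, which is smooth throughout $\Omega$, so that $\Sigma(\Omega)=\emptyset$ and $|\Sigma(\Omega)|=0$, with $\nabla\delta=x/|x|$ and $|\nabla\delta|=1$. In particular $\delta$ is $G$-invariant, which will let me drop the reflection terms in the Dunkl Laplacian.

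Next I would verify the structural hypothesis $\langle\rho,\nabla\delta\rangle\ge0$ and compute $\Delta_{k}\delta$. Using $\langle\alpha,\nabla\delta\rangle=\langle\alpha,x\rangle/|x|$ one gets $\langle\rho,\nabla\delta\rangle=2\sum_{\alpha\in R_{+}}k_{\alpha}|x|^{-1}=2\gamma/|x|\ge0$. Since $\delta$ is $G$-invariant the reflection difference quotients in the formula for $\Delta_{k}$ vanish, leaving $\Delta_{k}\delta=\Delta\delta+2\sum_{\alpha\in R_{+}}k_{\alpha}\langle\nabla\delta,\alpha\rangle/\langle\alpha,x\rangle=\frac{N-1}{|x|}+\frac{2\gamma}{|x|}=\frac{N+2\gamma-1}{|x|}$, using $\Delta|x|=(N-1)/|x|$. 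The crucial geometric point is that $r>0$ forces $|x|=\delta+r\ge\delta$, hence $\Delta_{k}\delta\le(N+2\gamma-1)/\delta$.

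With these facts Theorem \ref{T-3-2} applies. I would substitute the bound on $\Delta_{k}\delta$ into the term $-\epsilon^{-p}\int_{\Omega}\Delta_{k}\delta\,|u|^{p}\delta^{1-p}\,d\mu_{k}$; because this term carries the negative coefficient $-\epsilon^{-p}$, an upper bound on $\Delta_{k}\delta$ produces a lower bound on the whole expression, which is exactly the direction needed. Collecting terms then yields $\int_{\Omega}|\nabla_{k}u|^{p}\,d\mu_{k}\ge C(\epsilon)\int_{\Omega}|u|^{p}\delta^{-p}\,d\mu_{k}$ with $C(\epsilon)=(p-N-2\gamma)\epsilon^{-p}-(p-1)\epsilon^{-p^{2}/(p-1)}$. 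It remains to maximize $C(\epsilon)$ over $\epsilon>0$; differentiating and using $-\frac{p^{2}}{p-1}+p=-\frac{p}{p-1}$ gives the critical value $\epsilon^{-p/(p-1)}=(p-N-2\gamma)/p$, at which a short computation returns $C=\big(\frac{p-N-2\gamma}{p}\big)^{p}$, the claimed constant.

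The main obstacle is really the bookkeeping of signs together with the geometric input: the inequality runs in the right direction only because $r>0$ gives $|x|\ge\delta$, and the resulting constant is positive precisely because $p>N+2\gamma$ makes the leading coefficient $p-N-2\gamma$ positive; the optimization itself is routine calculus. One should also confirm that the critical point is a maximum (as $\epsilon\to0^{+}$ one has $C(\epsilon)\to-\infty$ since $p^{2}/(p-1)>p$, while $C(\epsilon)\to0^{+}$ as $\epsilon\to\infty$), so the interior critical point indeed realizes the supremum.
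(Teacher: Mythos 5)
Your proposal is correct and follows essentially the same route as the paper: both specialize Theorem \ref{T-3-2} to $\Omega=B(o,r)^{c}$ using $\delta=|x|-r$, $\nabla\delta=x/|x|$, $\langle\rho,\nabla\delta\rangle=2\gamma/|x|\geq0$, $\Delta_{k}\delta=(N+2\gamma-1)/|x|\leq(N+2\gamma-1)/\delta$, and then maximize $(p-N-2\gamma)\epsilon^{-p}-(p-1)\epsilon^{-p^{2}/(p-1)}$ at $\epsilon=\left(\frac{p}{p-N-2\gamma}\right)^{(p-1)/p}$ to obtain $\left(\frac{p-N-2\gamma}{p}\right)^{p}$. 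Your additional checks (that $\delta$ is smooth so $\Sigma(\Omega)=\emptyset$, and that the critical point is a genuine maximum via the boundary behavior of $C(\epsilon)$) are sound refinements of details the paper leaves implicit.
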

\begin{proof}
When $\Omega=B(o,r)^{c}$, then $|\Sigma(\Omega)|=0$,  $\delta=|x|-r$, $\nabla\delta=\frac{x}{|x|}$, $\Delta_{k}\delta=\frac{N+2\gamma-1}{|x|}<\frac{p-1}{\delta}$, and
\begin{equation*}
\begin{split}
 (p-1)\left(\epsilon^{-p}-\epsilon^{-\frac{p^{2}}{p-1}}\right)\frac{1}{\delta}-\epsilon^{-p}\Delta_{k}\delta&=(p-1)\left(\epsilon^{-p}-\epsilon^{-\frac{p^{2}}{p-1}}\right)\frac{1}{|x|-r}-\epsilon^{-p}\frac{N+2\gamma-1}{|x|} \\
    &\geq \left[(p-N-2\gamma)\epsilon^{-p}-(p-1)\epsilon^{-\frac{p^{2}}{p-1}}\right]\frac{1}{|x|-r},
\end{split}
\end{equation*}
we note $(p-N-2\gamma)\epsilon^{-p}-(p-1)\epsilon^{-\frac{p^{2}}{p-1}}$ takes the maximum value $\left(\frac{p-N-2\gamma}{p}\right)^{p}$ when $\epsilon=\left(\frac{p}{p-N-2\gamma}\right)^{\frac{p-1}{p}}$.
Note that $\langle\rho,\nabla\delta\rangle=\frac{2\gamma}{|x|}\geq0$, we complete the proof  by  Theorem \ref{T-3-2}.
\end{proof}

Let $r$ tends to zero, the following sharp inequality follows from Corollary \ref{COR-1}.
\begin{corollary}
Suppose that $p>N+2\gamma$. The following inequality holds for all $u\in C^{\infty}_{0}(\mathbb{R}^{N}\backslash\{0\})$,
\begin{equation}
\int_{\mathbb{R}^{N}}|\nabla_{k}u|^{p}d\mu_{k}\geq\left(\frac{p-N-2\gamma}{p}\right)^{p}\int_{\mathbb{R}^{N}}\frac{|u|^{p}}{|x|^{p}}d\mu_{k}.
\end{equation}
\end{corollary}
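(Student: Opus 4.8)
The plan is to obtain this inequality as the limit $r\downarrow 0$ of the family of inequalities furnished by Corollary~\ref{COR-1}. First I would fix $u\in C^{\infty}_{0}(\mathbb{R}^{N}\backslash\{0\})$. Since $\supp u$ is a compact subset of $\mathbb{R}^{N}\backslash\{0\}$, there exists $r_{0}>0$ with $\supp u\subset\{x\in\mathbb{R}^{N}:|x|>r_{0}\}$. Consequently, for every $r$ with $0<r<r_{0}$ we have $u\in C^{\infty}_{0}(B(o,r)^{c})$, so Corollary~\ref{COR-1} applies with $\delta(x)=|x|-r$ and gives
\begin{equation*}
\int_{\mathbb{R}^{N}}|\nabla_{k}u|^{p}d\mu_{k}\geq\left(\frac{p-N-2\gamma}{p}\right)^{p}\int_{B(o,r)^{c}}\frac{|u|^{p}}{(|x|-r)^{p}}d\mu_{k}.
\end{equation*}
The left-hand side is independent of $r$, so it suffices to track the right-hand side as $r\downarrow 0$.

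For the passage to the limit I would argue as follows. On $\supp u$ we have $|x|>r_{0}>r$, so the integrand $|u|^{p}(|x|-r)^{-p}$ is well defined there; as $r$ decreases to $0$ the factor $(|x|-r)^{-p}$ decreases monotonically and converges pointwise to $|x|^{-p}$, and for all $r<r_{0}/2$ it is dominated on $\supp u$ by the constant $(r_{0}/2)^{-p}$, which is integrable against $d\mu_{k}$ over the compact set $\supp u$. By the dominated convergence theorem,
\begin{equation*}
\lim_{r\downarrow 0}\int_{B(o,r)^{c}}\frac{|u|^{p}}{(|x|-r)^{p}}d\mu_{k}=\int_{\mathbb{R}^{N}}\frac{|u|^{p}}{|x|^{p}}d\mu_{k}.
\end{equation*}
Taking $r\downarrow 0$ in the displayed inequality then yields the claimed estimate.

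The argument presents no serious obstacle; the one point that genuinely matters is the hypothesis that $u$ vanish near the origin. Since $p>N+2\gamma$ and $\omega_{k}$ is homogeneous of degree $2\gamma$, the weight $|x|^{-p}\,\omega_{k}(x)$ behaves like $|x|^{2\gamma-p}$ near $0$ and is therefore \emph{not} integrable there, so the right-hand side would be infinite for a generic $u\in C^{\infty}_{0}(\mathbb{R}^{N})$. The requirement $u\in C^{\infty}_{0}(\mathbb{R}^{N}\backslash\{0\})$ is exactly what keeps $\supp u$ bounded away from the origin, which simultaneously renders the right-hand side finite and legitimises the interchange of limit and integral above. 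The sharpness of the constant $\left(\frac{p-N-2\gamma}{p}\right)^{p}$ would be confirmed separately by a standard test-function computation, inserting radial power-type functions truncated to vanish near $0$ into both sides.
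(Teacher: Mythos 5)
Your proposal is correct and follows essentially the same route as the paper, which likewise obtains the inequality by letting $r\to 0$ in Corollary~\ref{COR-1}; your dominated-convergence justification fills in the limit passage the paper leaves implicit, and in fact no limit is needed at all, since $(|x|-r)^{-p}\geq |x|^{-p}$ on $\supp u$ means each fixed $r<r_{0}$ already yields the stated inequality outright. Your closing remark on sharpness is also on the right track and, notably, is more careful than the paper itself: the paper's test function (equal to $1$ for $r\leq 1$ and $r^{\frac{p-N-2\gamma-\epsilon}{p}}$ for $r>1$) makes $\int |u_{\epsilon}|^{p}|x|^{-p}\,d\mu_{k}$ diverge at the origin because $p>N+2\gamma$, so the truncation near $0$ that you prescribe is exactly the correction needed for the optimality computation to be valid.
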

\begin{proof}
There only remains to prove the optimality of the constant $\left(\frac{p-N-2\gamma}{p}\right)^{p}$. For any $\epsilon>0$ we choose
\begin{equation*}
u_{\epsilon}(r)=\left\{
\begin{array}{cc}
 1 &,r\leq1,\\
 r^{\frac{p-N-2\gamma-\epsilon}{p}} &,r>1.
\end{array}\right.
\end{equation*}
We can write $d\mu_{k}=r^{2\gamma}w_{k}(\xi)drd\nu(\xi)$, where $\nu$ is the surface measure on the sphere ${\mathbb{S}}^{N-1}$. Thus by directly computing we have
\begin{equation*}
\lim\limits_{\epsilon\rightarrow0}\frac{\int_{{\mathbb{R}}^{N}}|\nabla_{k}u|^{p}d\mu_{k}}{\int_{{\mathbb{R}}^{N}}\frac{|u|^{p}}{|x|^{p}}d\mu_{k}}=\left(\frac{p-N-2\gamma}{p}\right)^{p}.
\end{equation*}

\end{proof}

\section{Hardy-Rellich type inequality}

\noindent\textbf{Spherical h-harmonics.}
We will introduce some concepts and fundermental facts for spherical $h$-harmonic theory, see \cite{i} for more details. If a homogeneous polynomial $p$ of degree $n$ that satisfies
\begin{equation*}
\Delta_{k}p=0,
\end{equation*}
then we called it an h-harmonic polynomial of degree $n$. Spherical $h$-harmonics (or just $h$-harmonics) of degree $n$ are defined as the restrictions of $h$-harmonic polynomials of degree $n$ to the unit sphere ${\mathbb{S}}^{N-1}$. Denote $\mathcal{P}_{n}$ the space of $h$-harmonics of degree $n$. Denote $d(n)$ the dimension of $\mathcal{P}_{n}$, it is finite and given by following formula:
\begin{equation*}
d(n)=\tbinom{n+N-1}{N-1}-\tbinom{n+N-3}{N-1}.
\end{equation*}

Moreover, the space ${{L}}^{2}({\mathbb{S}}^{N-1},\omega_{k}(\xi)d\xi)$ can be decomposed as the orthogonal direct sum of the spaces $\mathcal{P}_{n}$, for $n=0,1,2,\ldots$.

Let $Y_{i}^{n},\ i=1,\ldots,d(n)$ be an orthogonal basis of $\mathcal{P}_{n}$, In spherical polar coordinates $x=r\xi$, for $r\in{[0,\infty)}$ and $\xi\in{\mathbb{S}}^{N-1}$, we can write the Dunkl laplacian as
\begin{equation*}
\Delta_{k}=\frac{\partial^{2}}{\partial{r}^{2}}+\frac{N+2\gamma-1}{r}\frac{\partial}{\partial{r}}+\frac{1}{r^{2}}\Delta_{k,0},
\end{equation*}
where $\Delta_{k,0}$ is an analogue of the classical Laplace-Beltrami operator on the sphere, and it only acts on the $\xi$ variable. Then the spherical h-harmonics $Y_{i}^{n}$ are eigenfunctions of $\Delta_{k,0}$, and it's eigenvalues are given by
\begin{equation*}
\Delta_{k,0}Y_{i}^{n}=-n(n+N+2\gamma-2)Y_{i}^{n}=:\lambda_{n}Y_{i}^{n}.
\end{equation*}
The $h$-harmonic expansion of a function $u\in{L^{2}(\mu_{k})}$ can be expressed as
\begin{equation*}
u(r\xi)=\sum_{n=0}^{\infty}\sum_{i=1}^{d(n)}u_{n,i}(r)Y_{i}^{n}(\xi),
\end{equation*}
where
\begin{equation}\label{eq4.1}
u_{n,i}(r)=\int_{{\mathbb{S}}^{N-1}}u(r\xi)Y_{i}^{n}(\xi)\omega_{k}(\xi)d\nu(\xi),
\end{equation}
and $\nu$ is the surface measure on the sphere ${\mathbb{S}}^{N-1}$.

\begin{theorem}\label{T-4-1}
Let $\overline{N}\neq{2}$. Then we have the inequality
\begin{equation}\label{eq4.2}
\int_{{\mathbb{R}}^{N}}|x|^{2}|\Delta_{k}u|^{2}d\mu_{k}\geq\frac{({\overline{N}-2})^{2}}{4}\int_{{\mathbb{R}}^{N}}
|\nabla_{k}u|^{2}d\mu_{k},
\end{equation}
where $\overline{N}:=N+2\gamma$, and the constant $\frac{({\overline{N}-2})^{2}}{4}$ is sharp.
\end{theorem}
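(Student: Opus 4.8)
\emph{The plan is to use the spherical $h$-harmonic decomposition to split both integrals into sums of one-dimensional radial integrals indexed by the harmonic degree $n$, reduce the theorem to a family of weighted $1$-D inequalities, and settle each of these by a Cauchy--Schwarz step combined with the sharp one-dimensional Hardy inequality.} First I would expand $u(r\xi)=\sum_{n,i}u_{n,i}(r)Y_{i}^{n}(\xi)$ and use the polar form $\Delta_{k}=\partial_{r}^{2}+\frac{\overline{N}-1}{r}\partial_{r}+\frac{1}{r^{2}}\Delta_{k,0}$ together with $\Delta_{k,0}Y_{i}^{n}=\lambda_{n}Y_{i}^{n}$, $\lambda_{n}=-n(n+\overline{N}-2)$, to get $\Delta_{k}u=\sum_{n,i}(L_{n}u_{n,i})Y_{i}^{n}$ with $L_{n}f:=f''+\frac{\overline{N}-1}{r}f'+\frac{\lambda_{n}}{r^{2}}f$. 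Since $d\mu_{k}=r^{\overline{N}-1}\omega_{k}(\xi)\,dr\,d\nu(\xi)$ and $\{Y_{i}^{n}\}$ is orthonormal in $L^{2}(\mathbb{S}^{N-1},\omega_{k}d\nu)$, the left-hand side becomes $\sum_{n,i}\int_{0}^{\infty}(L_{n}u_{n,i})^{2}r^{\overline{N}+1}\,dr$. For the right-hand side I would use the Dunkl integration by parts $\int_{\mathbb{R}^{N}}|\nabla_{k}u|^{2}d\mu_{k}=-\int_{\mathbb{R}^{N}}u\,\Delta_{k}u\,d\mu_{k}$, which after the same decomposition and a radial integration by parts equals $\sum_{n,i}\int_{0}^{\infty}\bigl[(u_{n,i}')^{2}-\lambda_{n}u_{n,i}^{2}/r^{2}\bigr]r^{\overline{N}-1}\,dr$. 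Thus it suffices to prove, for each $n$ and $f=u_{n,i}$,
\[
\int_{0}^{\infty}(L_{n}f)^{2}r^{\overline{N}+1}\,dr\geq\frac{(\overline{N}-2)^{2}}{4}\int_{0}^{\infty}\left[(f')^{2}-\lambda_{n}\frac{f^{2}}{r^{2}}\right]r^{\overline{N}-1}\,dr .
\]

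For the one-dimensional inequality I would denote the right-hand energy by $\mathcal{E}_{n}(f):=\int_{0}^{\infty}\bigl[(f')^{2}-\lambda_{n}f^{2}/r^{2}\bigr]r^{\overline{N}-1}\,dr$, which by the radial integration by parts equals $-\int_{0}^{\infty}f(L_{n}f)\,r^{\overline{N}-1}\,dr$. Applying Cauchy--Schwarz to this identity in the form $\mathcal{E}_{n}(f)=-\int_{0}^{\infty}(f/r)(rL_{n}f)\,r^{\overline{N}-1}\,dr$ gives $\mathcal{E}_{n}(f)\leq\bigl(\int_{0}^{\infty}f^{2}r^{\overline{N}-3}\,dr\bigr)^{1/2}\bigl(\int_{0}^{\infty}(L_{n}f)^{2}r^{\overline{N}+1}\,dr\bigr)^{1/2}$. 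Next I would invoke the sharp weighted one-dimensional Hardy inequality $\int_{0}^{\infty}(f')^{2}r^{\overline{N}-1}\,dr\geq\bigl(\frac{\overline{N}-2}{2}\bigr)^{2}\int_{0}^{\infty}f^{2}r^{\overline{N}-3}\,dr$; since $-\lambda_{n}=n(n+\overline{N}-2)\geq0$, adding this nonnegative term yields $\mathcal{E}_{n}(f)\geq\bigl(\frac{\overline{N}-2}{2}\bigr)^{2}\int_{0}^{\infty}f^{2}r^{\overline{N}-3}\,dr$. Combining the two bounds gives $\mathcal{E}_{n}(f)^{1/2}\leq\frac{2}{|\overline{N}-2|}\bigl(\int_{0}^{\infty}(L_{n}f)^{2}r^{\overline{N}+1}\,dr\bigr)^{1/2}$, and squaring produces the displayed mode inequality with constant $(\overline{N}-2)^{2}/4$, which is \emph{uniform} in $n$; summing over $(n,i)$ yields \eqref{eq4.2}. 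I note that the inequality in fact follows globally without the decomposition, directly from $\int|\nabla_{k}u|^{2}d\mu_{k}=-\int u\Delta_{k}u\,d\mu_{k}\leq(\int u^{2}/|x|^{2}d\mu_{k})^{1/2}(\int|x|^{2}|\Delta_{k}u|^{2}d\mu_{k})^{1/2}$ together with the Hardy inequality (1.5); the decomposition is, however, the natural vehicle for sharpness.

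For sharpness I would restrict to the lowest mode $n=0$, i.e. radial $u=f(r)$: here $|\sigma_{\alpha}x|=|x|$ forces the reflection differences to vanish, so $\nabla_{k}u=\nabla u$ and $|\nabla_{k}u|^{2}=(f')^{2}$, and the Rayleigh quotient reduces to $\int_{0}^{\infty}(f''+\frac{\overline{N}-1}{r}f')^{2}r^{\overline{N}+1}\,dr\big/\int_{0}^{\infty}(f')^{2}r^{\overline{N}-1}\,dr$. Testing with $f\sim r^{1-\overline{N}/2}$, so that $f'\sim r^{-\overline{N}/2}$ sits at the critical Hardy exponent, suitably cut off near $0$ and $\infty$, drives this quotient to $(\overline{N}-2)^{2}/4$; for this profile one checks $\Delta_{k}u\approx-\bigl(\frac{\overline{N}-2}{2}\bigr)^{2}u/|x|^{2}$, so Cauchy--Schwarz and the Hardy inequality saturate \emph{simultaneously}. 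I expect the main obstacle to be precisely this sharpness argument: choosing the cutoffs so that all weighted integrals converge and every boundary term in both the Dunkl and the radial integrations by parts vanishes, while verifying that the single profile $r^{1-\overline{N}/2}$ makes both inequalities asymptotically tight. The degenerate borderline $\overline{N}=2$, where the constant vanishes, is excluded by hypothesis.
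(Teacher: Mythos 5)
Your proposal is correct, and although it shares the paper's scaffolding (the spherical $h$-harmonic decomposition, the polar form of $\Delta_{k}$, the reduction to weighted one-dimensional radial inequalities, and essentially the same test profile $r^{-(\overline{N}-2+\epsilon)/2}$ for sharpness), the core mechanism is genuinely different. The paper expands $(L_{n}u_{n,i})^{2}$, integrates by parts to get a quadratic form in $u''_{n,i}$, $u'_{n,i}$, $u_{n,i}$ with weights $r^{\overline{N}+1}$, $r^{\overline{N}-1}$, $r^{\overline{N}-3}$, and then needs \emph{two} weighted Hardy inequalities (including $\int_{0}^{\infty}|v'|^{2}r^{\overline{N}+1}dr\geq\frac{\overline{N}^{2}}{4}\int_{0}^{\infty}v^{2}r^{\overline{N}-1}dr$ applied to $v=u'_{n,i}$) plus coefficient bookkeeping: the constraint $C\leq\frac{(\overline{N}-2)^{2}}{4}-2\lambda_{n}$ and the verification that $\bigl(\frac{(\overline{N}-2)^{2}}{4}-C\bigr)\frac{(\overline{N}-2)^{2}}{4}+\lambda_{n}\bigl(\lambda_{n}+C-\frac{(\overline{N}-2)^{2}}{2}\bigr)\geq0$. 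Your duality argument --- writing $\mathcal{E}_{n}(f)=-\int_{0}^{\infty}f(L_{n}f)r^{\overline{N}-1}dr$, splitting as $\bigl(fr^{(\overline{N}-3)/2}\bigr)\bigl(L_{n}f\cdot r^{(\overline{N}+1)/2}\bigr)$ for Cauchy--Schwarz, then absorbing $\int_{0}^{\infty}f^{2}r^{\overline{N}-3}dr$ via a single Hardy inequality using $-\lambda_{n}\geq0$ (which does hold for all $n$ since $\overline{N}\geq1$) --- avoids all of this, and your parenthetical remark is the real prize: the same three-line argument runs globally, $\int|\nabla_{k}u|^{2}d\mu_{k}=-\int u\Delta_{k}u\,d\mu_{k}\leq\bigl(\int u^{2}/|x|^{2}d\mu_{k}\bigr)^{1/2}\bigl(\int|x|^{2}|\Delta_{k}u|^{2}d\mu_{k}\bigr)^{1/2}$ combined with the Dunkl--Hardy inequality (1.5), so the inequality itself needs no decomposition at all. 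What the paper's heavier mode-by-mode computation buys is explicit per-mode constants, and that machinery becomes genuinely necessary in its Theorem 4.2, where the lowest modes compete and the dimensional restriction $N\geq5+2\gamma$ emerges; for Theorem 4.1 your route is shorter and cleaner. Your sharpness check is also sound: $f=r^{-(\overline{N}-2)/2}$ satisfies $\Delta_{k}f=-\frac{(\overline{N}-2)^{2}}{4}f/|x|^{2}$ exactly, so Cauchy--Schwarz and Hardy saturate simultaneously, matching the paper's computation with the $\epsilon$-perturbed power. One caveat, shared equally with the paper: the admissible function class is left implicit, and for $\overline{N}<2$ one must take $u\in C_{0}^{\infty}(\mathbb{R}^{N}\setminus\{0\})$ so that the boundary term at $r=0$ in the one-dimensional Hardy inequality vanishes.
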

\begin{proof}
Our goal is to find best constant $C$ satisfing\\
\begin{equation*}
\int_{{\mathbb{R}}^{N}}|x|^{2}|\Delta_{k}u|^{2}d\mu_{k}-C\int_{{\mathbb{R}}^{N}}|\nabla_{k}u|^{2}d\mu_{k}\geq0.
\end{equation*}
Using spherical decomposition:
\begin{equation*}
\int_{{\mathbb{R}}^{N}}|x|^{2}|\Delta_{k}u|^{2}d\mu_{k}=\sum_{n=0}^{\infty}\sum_{i=1}^{d(n)}\int_{0}^{+\infty}
\left(u^{''}_{n,i}+\frac{{\overline{N}}-1}{r}u^{'}_{n,i}+\frac{\lambda_{n}}{r^{2}}u_{n,i}\right)^{2}r^{{\overline{N}}+1}dr,
\end{equation*}
\begin{equation*}
\int_{{\mathbb{R}}^{N}}|\nabla_{k}u|^{2}d\mu_{k}=-\sum\limits_{n=0}^{\infty}\sum\limits_{i=1}^{d(n)}\int_{0}^{+\infty}\left(u^{''}_{n,i}+\frac{{\overline{N}}-1}{r}u^{'}_{n,i}+\frac{\lambda_{n}}{r^{2}}u_{n,i}\right)u_{n,i}r^{{\overline{N}}-1}dr.
\end{equation*}

By integration by parts, we have
\begin{multline*}
\int_{{\mathbb{R}}^{N}}|x|^{2}|\Delta_{k}u|^{2}d\mu_{k}-C\int_{{\mathbb{R}}^{N}}|\nabla_{k}u|^{2}d\mu_{k} \\
=\sum\limits_{n=0}^{\infty}\sum\limits_{i=1}^{d(n)}\int_{0}^{+\infty}\left(|u^{''}_{n,i}|^{2}r^{{\overline{N}}+1}-[({\overline{N}}-1)+2\lambda_{n}+C]|u^{'}_{n,i}|^{2}r^{{\overline{N}}-1} +(C\lambda_{n}+\lambda^{2}_{n})u^{2}_{n,i}r^{{\overline{N}}-3}\right)dr.
\end{multline*}
Let
\begin{equation*}
\begin{split}
I_{n,i}&=\int_{0}^{+\infty}|u^{''}_{n,i}|^{2}r^{{\overline{N}}+1}dr-[({\overline{N}}-1)+2\lambda_{n}+C]\int_{0}^{+\infty}|u^{'}_{n,i}|^{2}r^{{\overline{N}}-1}dr \\
&+(C\lambda_{n}+\lambda^{2}_{n})\int_{0}^{+\infty}u^{2}_{n,i}r^{{\overline{N}}-3}dr.
\end{split}
\end{equation*}

By using the following two weighted Hardy inequalities,
\begin{equation}
\int_{0}^{+\infty}|u^{'}|^{2}r^{{\overline{N}}+1}dr\geq\frac{{\overline{N}}^{2}}{4}\int_{0}^{+\infty}
u^{2}r^{{\overline{N}}-1}dr,
\end{equation}
\begin{equation}
\int_{0}^{+\infty}|u^{'}|^{2}r^{{\overline{N}}-1}dr\geq\frac{(\overline{N}-2)^{2}}{4}\int_{0}^{+\infty}
u^{2}r^{{\overline{N}}-3}dr.
\end{equation}

So
\begin{equation*}
\begin{split}
I_{n,i}&\geq\left(\frac{{\overline{N}}^{2}}{4}-(\overline{N}-1)-2\lambda_{n}-C\right)\int_{0}^{+\infty}|u^{'}_{n,i}|^{2}r^{{\overline{N}}-1}dr+(\lambda^{2}_{n}+C\lambda_{n})\int_{0}^{+\infty}u^{2}_{n,i}r^{{\overline{N}}-3}dr \\
&=\left(\frac{({\overline{N}}-2)^{2}}{4}-C-2\lambda_{n}\right)\int_{0}^{+\infty}|u^{'}_{n,i}|^{2}r^{{\overline{N}}-1}dr+\lambda_{n}(\lambda_{n}+C)\int_{0}^{+\infty}u^{2}_{n,i}r^{{\overline{N}}-3}dr.
   \end{split}
\end{equation*}

Let $C\leq\frac{({\overline{N}}-2)^{2}}{4}-2\lambda_{n}$, then we have
\begin{eqnarray*}
I_{n,i}&\geq&\left[\left(\frac{({\overline{N}}-2)^{2}}{4}-C-2\lambda_{n}\right){\frac{({\overline{N}}-2)^{2}}{4}}+\lambda_{n}(\lambda_{n}+C)\right]\int_{0}^{+\infty}u^{2}_{n,i}r^{{\overline{N}}-3}dr \\
&=&\left[\left(\frac{({\overline{N}}-2)^{2}}{4}-C\right){\frac{({\overline{N}}-2)^{2}}{4}}+\lambda_{n}\left(\lambda_{n}+C-\frac{({\overline{N}}-2)^{2}}{2}\right)\right]\int_{0}^{+\infty}u^{2}_{n,i}r^{{\overline{N}}-3}dr\geq0.
\end{eqnarray*}
Because $C\leq\frac{({\overline{N}}-2)^{2}}{4}-2\lambda_{n}$ and $C_{max}=\min\limits_{n}\{\frac{({\overline{N}}-2)^{2}}{4}-2\lambda_{n}\}=\frac{({\overline{N}}-2)^{2}}{4}$,
therefore
\begin{equation*}
\left(\frac{({\overline{N}}-2)^{2}}{4}-C\right){\frac{({\overline{N}}-2)^{2}}{4}}+\lambda_{n}\left(\lambda_{n}+C-\frac{({\overline{N}}-2)^{2}}{2}\right)\geq0.
\end{equation*}
Thus (\ref{eq4.2}) holds.
Then we show the optimality of $\frac{({\overline{N}-2})^{2}}{4}$. For any $\epsilon>o$, let
\begin{equation*}
u_{\epsilon}(r)=\left\{
\begin{array}{cc}
 1&,r<1,\\
r^{-\frac{\overline{N}-2+\epsilon}{2}} &,r>1.
\end{array}\right.
\end{equation*}
Straightforward calculation shows
\begin{equation*}
\lim\limits_{\epsilon\rightarrow0}\frac{\int_{{\mathbb{R}}^{N}}|x|^{2}|\Delta_{k}u|^{2}d\mu_{k}}{\int_{{\mathbb{R}}^{N}}|\nabla_{k}u|^{2}d\mu_{k}}=\frac{(\overline{N}-2)^{2}}{4}.
\end{equation*}
This completes the proof of Theorem \ref{T-4-1}.
\end{proof}

\begin{theorem}
Assume $N\geq{5}+2\gamma$. Then, for any $u\in{C_{0}^{\infty}} ({\mathbb{R}}^{N})$, we have the inequality
\begin{equation}\label{T-4-2}
\int_{{\mathbb{R}}^{N}}|\Delta_{k}u|^{2}d\mu_{k}\geq{\frac{{\overline{N}}^{2}}{4}}\int_{{\mathbb{R}}^{N}}
\frac{|\nabla_{k}u|^{2}}{|x|^{2}}d\mu_{k},
\end{equation}
where the constant $\frac{{\overline{N}}^{2}}{4}$ is sharp.
\end{theorem}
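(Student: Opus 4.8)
The plan is to mirror the spherical $h$-harmonic method used for Theorem \ref{T-4-1}. Writing $u(r\xi)=\sum_{n,i}u_{n,i}(r)Y_i^n(\xi)$ and using $\Delta_k=\partial_r^2+\frac{\overline{N}-1}{r}\partial_r+\frac{1}{r^2}\Delta_{k,0}$ together with $\Delta_{k,0}Y_i^n=\lambda_n Y_i^n$ and the orthonormality of the $Y_i^n$ on $(\mathbb{S}^{N-1},\omega_k d\nu)$, I would first reduce both integrals to sums of one-dimensional radial integrals. The left-hand side is exact and clean,
\[
\int_{\mathbb{R}^N}|\Delta_k u|^2 d\mu_k=\sum_{n,i}\int_0^\infty\Big(u_{n,i}''+\frac{\overline{N}-1}{r}u_{n,i}'+\frac{\lambda_n}{r^2}u_{n,i}\Big)^2 r^{\overline{N}-1}\,dr,
\]
and the target radial form of the right-hand side (whose justification is the delicate point, see below) is
\[
\int_{\mathbb{R}^N}\frac{|\nabla_k u|^2}{|x|^2}\,d\mu_k=\sum_{n,i}\int_0^\infty\Big((u_{n,i}')^2-\frac{\lambda_n}{r^2}u_{n,i}^2\Big) r^{\overline{N}-3}\,dr .
\]
It then suffices to prove the inequality mode by mode, for each fixed $f=u_{n,i}$ and $\lambda=\lambda_n\le 0$.

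With both sides radial, I would expand the square in the left integrand and remove the cross terms $f''f'$, $f''f$, $f'f$ by integration by parts in $r$; since for $n\ge 1$ the coefficient $u_{n,i}$ vanishes to order $n$ at the origin, no boundary terms appear. This recasts the per-mode inequality as a combination of $\int(f'')^2 r^{\overline{N}-1}$, $\int(f')^2 r^{\overline{N}-3}$ and $\int f^2 r^{\overline{N}-5}$. I would then invoke the one-dimensional weighted Hardy inequality $\int_0^\infty(g')^2 r^{m}\,dr\ge\big(\frac{m-1}{2}\big)^2\int_0^\infty g^2 r^{m-2}\,dr$ twice: with $g=f'$, $m=\overline{N}-1$ (constant $\frac{(\overline{N}-2)^2}{4}$) to dispose of $\int(f'')^2$, after which the coefficient of $\int(f')^2 r^{\overline{N}-3}$ collapses to the nonnegative number $-2\lambda$; then with $g=f$, $m=\overline{N}-3$ (constant $\frac{(\overline{N}-4)^2}{4}$) to trade that term for $\int f^2 r^{\overline{N}-5}$. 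The whole expression reduces to
\[
\lambda\Big(\lambda-\frac{\overline{N}(\overline{N}-8)}{4}\Big)\int_0^\infty f^2 r^{\overline{N}-5}\,dr,
\]
which vanishes for $n=0$ and, for $n\ge 1$ (where $\lambda<0$), is nonnegative precisely when $\lambda_n\le\frac{\overline{N}(\overline{N}-8)}{4}$. Since $\lambda_n\le\lambda_1=-(\overline{N}-1)$, this holds as soon as $\overline{N}^2-4\overline{N}-4\ge 0$, i.e. $\overline{N}=N+2\gamma\ge 2+2\sqrt2$, which is guaranteed by the hypothesis $N\ge 5+2\gamma$.

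For sharpness I would test with the radial ($n=0$) family $u_\epsilon(r)=r^{-(\overline{N}-4+\epsilon)/2}$ for $r>1$ and $u_\epsilon\equiv 1$ for $r\le 1$. In the radial mode the inequality is driven solely by the weighted Hardy inequality $\int(f'')^2 r^{\overline{N}-1}\ge\frac{(\overline{N}-2)^2}{4}\int(f')^2 r^{\overline{N}-3}$, and a direct computation of the quotient for the pure power $r^{-s}$ gives the ratio $(s+2-\overline{N})^2$, which tends to $\frac{\overline{N}^2}{4}$ as $s\downarrow\frac{\overline{N}-4}{2}$ (i.e. $\epsilon\to 0$), the contribution from $r>1$ dominating the bounded inner region. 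This shows the constant cannot be improved.

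The step I expect to be the main obstacle is justifying the clean radial form of $\int\frac{|\nabla_k u|^2}{|x|^2}\,d\mu_k$. Unlike the unweighted integral $\int|\nabla_k u|^2 d\mu_k=-\int u\,\Delta_k u\,d\mu_k$, integrating $|\nabla_k u|^2$ against the non-constant radial weight $|x|^{-2}$ produces, through the Leibniz rule and the identity $\sum_i x_i T_i u=r\partial_r u+\sum_{\alpha\in R_+}k_\alpha\big(u-u\circ\sigma_\alpha\big)$, an extra reflection contribution of the type $\int|x|^{-4}\sum_{\alpha\in R_+}k_\alpha\big(u-u\circ\sigma_\alpha\big)^2 d\mu_k$ that is absent when the weight is constant; establishing that this term is accounted for is exactly the feature separating the Dunkl setting from the classical Hardy–Rellich inequality \eqref{T-4-2}. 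Once the radial reduction is in place, the remaining difficulty is purely algebraic, namely verifying that the quadratic $\lambda(\lambda-\frac{\overline{N}(\overline{N}-8)}{4})$ stays nonnegative down to the smallest nonzero eigenvalue $\lambda_1=-(\overline{N}-1)$, which is what pins down the threshold $N\ge 5+2\gamma$.
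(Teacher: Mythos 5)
Your left-hand radial reduction, the two weighted Hardy inequalities, and the sharpness family $u_\epsilon=r^{-(\overline{N}-4+\epsilon)/2}$ all match the paper's proof (your exponent is in fact the correct one; the paper's test function has an apparent typo $N$ in place of $\overline{N}$). But the step you yourself flag as ``the main obstacle'' is a genuine gap, not a technicality, and your proposal does not close it: the claimed equality
\[
\int_{\mathbb{R}^N}\frac{|\nabla_k u|^2}{|x|^2}\,d\mu_k=\sum_{n,i}\int_0^\infty\Bigl((u_{n,i}')^2-\frac{\lambda_n}{r^2}u_{n,i}^2\Bigr)r^{\overline{N}-3}\,dr
\]
is \emph{false} for $\gamma>0$. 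Integration by parts (as in the paper) gives
\[
\int_{\mathbb{R}^N}\frac{|\nabla_k u|^2}{|x|^2}\,d\mu_k=-\int_{\mathbb{R}^N}\frac{u\,\Delta_k u}{|x|^2}\,d\mu_k-(\overline{N}-4)\int_{\mathbb{R}^N}\frac{u^2}{|x|^4}\,d\mu_k+2\sum_{\alpha\in R_+}k_\alpha\int_{\mathbb{R}^N}\frac{(u-u\circ\sigma_\alpha)\,u}{|x|^4}\,d\mu_k,
\]
where the first two terms are exactly your clean radial form, while the reflection term, which by the substitution $x\mapsto\sigma_\alpha x$ equals $\sum_{\alpha}k_\alpha\int|x|^{-4}(u-u\circ\sigma_\alpha)^2\,d\mu_k\ge 0$, sits on the side of the Hardy--Rellich inequality that must be bounded \emph{above}. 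Dropping a nonnegative term there proves a strictly weaker statement than the theorem; it cannot be ``accounted for'' after the fact without a quantitative upper bound.

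The missing idea is the paper's treatment of precisely this term: since $\omega_k(\xi)\,d\nu(\xi)$ is $G$-invariant, the $n=0$ spherical $h$-harmonic components of $u$ and $u\circ\sigma_\alpha$ coincide, so Cauchy--Schwarz yields $\int|x|^{-4}(u-u\circ\sigma_\alpha)u\,d\mu_k\le 2\int|x|^{-4}(u-u_{0,1})^2\,d\mu_k$, and summing over $\alpha$ adds $4\gamma\sum_{n\ge1,i}\int_0^\infty u_{n,i}^2\,r^{\overline{N}-5}\,dr$ to the upper bound for the right-hand side. Carried through your per-mode scheme with $C=\overline{N}^2/4$, this replaces your quadratic $\lambda\bigl(\lambda-\frac{\overline{N}(\overline{N}-8)}{4}\bigr)$ by $D_n=\lambda_n\bigl(\lambda_n-\frac{\overline{N}^2-8\overline{N}}{4}\bigr)-\overline{N}^2\gamma$ for $n\ge 1$, and the binding case $D_1=\frac{\overline{N}^2(N-5-2\gamma)+4}{4}\ge 0$ is exactly where the hypothesis $N\ge 5+2\gamma$ is used. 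Your threshold $\overline{N}\ge 2+2\sqrt{2}$ is the $\gamma$-free condition of the classical case $k=0$ and is insufficient when $\gamma>0$; indeed the true condition $N-2\gamma\ge 5$ cannot even be expressed through $\overline{N}=N+2\gamma$ alone, which signals that the reflection term changes the constant analysis rather than the bookkeeping. Once this bound is inserted, the remainder of your argument (vanishing boundary terms, the Hardy constants $\frac{(\overline{N}-2)^2}{4}$ and $\frac{(\overline{N}-4)^2}{4}$, the radial sharpness computation giving the ratio $(s+2-\overline{N})^2\to\frac{\overline{N}^2}{4}$) goes through as in the paper.
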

\begin{proof}
By integration by parts,
\begin{equation*}
\int_{{\mathbb{R}}^{N}}\frac{|\nabla_{k}u|^{2}}{|x|^{2}}d\mu_{k}=-\int_{{\mathbb{R}}^{N}}\frac{\Delta_{k}u\cdot{u}}{{|x|^{2}}}d\mu_{k}+2\int_{{\mathbb{R}}^{N}}u\frac{x\cdot{\nabla_{k}u}}{{|x|}^{4}}d\mu_{k},
\end{equation*}
where
\begin{equation}
\begin{split}
\int_{{\mathbb{R}}^{N}}u\frac{x\cdot{\nabla_{k}u}}{{|x|}^{4}}d\mu_{k}&=-\int_{{\mathbb{R}}^{N}}u\cdot{\nabla_{k}(\frac{xu}{|x|^{4}})} d\mu_{k} \\
&=-\int_{{\mathbb{R}}^{N}}u\left(\frac{N-4}{|x|^{4}}u+\frac{x}{|x|^{4}}\nabla_{k}u+\frac{2}{|x|^{4}}\sum\limits_{\alpha\in{R_{+}}}k_{\alpha}u(\sigma_{\alpha}x)\right) d\mu_{k}.
\end{split}
\end{equation}
Then
\begin{equation*}
\int_{{\mathbb{R}}^{N}}\frac{x\cdot{\nabla_{k}u}}{{|x|}^{4}}d\mu_{k}=-\frac{N-4}{2}\int_{{\mathbb{R}}^{N}}\frac{u^{2}}{|x|^{4}}d\mu_{k}-
\sum\limits_{\alpha\in{R_{+}}}k_{\alpha}\int_{{\mathbb{R}}^{N}}\frac{u(\sigma_{\alpha}x)u}{|x|^{4}}d\mu_{k}.
\end{equation*}
Therefore
\begin{equation*}
\begin{split}
\int_{{\mathbb{R}}^{N}}&\frac{|\nabla_{k}u|^{2}}{|x|^{2}}d\mu_{k}= -\int_{{\mathbb{R}}^{N}}\frac{\Delta_{k}u\cdot{u}}{|x|^{2}}d\mu_{k}
-(N-4)\int_{{\mathbb{R}}^{N}}\frac{u^{2}}{|x|^{4}}d\mu_{k}-2\sum\limits_{\alpha\in{R_{+}}}k_{\alpha}\int_{{\mathbb{R}}^{N}}
\frac{u(\sigma_{\alpha}x)}{|x|^{4}}d\mu_{k} \\
 &=-\int_{{\mathbb{R}}^{N}}\frac{\Delta_{k}u\cdot{u}}{|x|^{2}}d\mu_{k}-({\overline{N}}-4)\int_{{\mathbb{R}}^{N}}
\frac{u^{2}}{|x|^{4}}d\mu_{k}+2\sum\limits_{\alpha\in{R_{+}}}k_{\alpha}\int_{{\mathbb{R}}^{N}}
\frac{(u-u(\sigma_{\alpha}x))u}{|x|^{4}}d\mu_{k}.
\end{split}
\end{equation*}
Let
\begin{equation*}
u(x)=\sum\limits_{n=0}^{+\infty}\sum\limits_{i=1}^{d(n)}u_{n,i}(r)Y_{i}^{n}(\xi),
\end{equation*}
\begin{equation*}
u(\sigma_{\alpha}x)=\sum\limits_{n=0}^{+\infty}\sum\limits_{i=1}^{d(n)}\widetilde{u}_{n,i}(r)Y_{i}^{n}(\xi),
\end{equation*}
where
\begin{equation*}
u_{0,1}(r)=\frac{1}{\omega_{d}^{k}}\int_{\mathbb{S}^{N-1}}u(r\xi)\omega_{k}(\xi)d\nu(\xi),
\end{equation*}
Note that  $\omega_{k}(\xi)d\nu(\xi)$ is G invariant, by a change of variables $\sigma_{\alpha}\xi\rightarrow{\xi}$, we follow from (\ref{eq4.1}) that
\begin{equation*}
\widetilde{u}_{0,1}(r)=u_{0,1}(r).
\end{equation*}
Thus
\begin{equation*}
u-u(\sigma_{\alpha}x)=\sum\limits_{n=1}^{+\infty}\sum\limits_{i=1}^{d(n)}(u_{n,i}(r)-\widetilde{u}_{n,i}(r))Y_{i}^{n}(\xi).
\end{equation*}
From Parseval identity, we have
\begin{equation*}
\begin{split}
\int_{{\mathbb{R}}^{N}}\frac{1}{|x|^{4}}(u-u(\sigma_{\alpha}x))ud\mu_{k}&=\sum\limits_{n=1}^{+\infty}\sum\limits_{i=1}^{d(n)}\int_{0}^{+\infty}(u_{n,i}(r)-\widetilde{u}_{n,i}(r))
\cdot{u_{n,i}}r^{\overline{N}-5}dr \\
&=\int_{{\mathbb{R}}^{N}}\frac{1}{|x|^{4}}\left[(u-u_{0,1})-(u(\sigma_{\alpha}x)-\widetilde{u}_{0,1})\right](u-u_{0,1})d\mu_{k}.
\end{split}
\end{equation*}
Note that
\begin{equation*}
\begin{split}
-&\int_{{\mathbb{R}}^{N}}\frac{1}{|x|^{4}}(u(\sigma_{\alpha}x)-\widetilde{u}_{0,1})(u-u_{0,1})d\mu_{k}\\
\leq&\left(\int_{{\mathbb{R}}^{N}}\frac{1}{|x|^{4}}(u(\sigma_{\alpha}x)-\widetilde{u}_{0,1})^{2}d\mu_{k}\right)^{\frac{1}{2}}{{\left(\int_{{\mathbb{R}}^{N}}\frac{1}{|x|^{4}}(u-u_{0,1})^{2}d\mu_{k}\right)}^{\frac{1}{2}}} \\
=&\int_{{\mathbb{R}}^{N}}\frac{1}{|x|^{4}}(u-u_{0,1})^{2}d\mu_{k},
\end{split}
\end{equation*}
then we have
\begin{equation}
\int_{{\mathbb{R}}^{N}}\frac{1}{|x|^{4}}(u-u(\sigma_{\alpha}x))u d\mu_{k}\leq{2}\int_{{\mathbb{R}}^{N}}
\frac{1}{|x|^{4}}(u-u_{0,1})^{2}d\mu_{k}.
\end{equation}
By spherical h-harmonic decomposition,
\begin{multline*}
\int_{{\mathbb{R}}^{N}}\frac{{|\nabla_{k}u|}^{2}}{|x|^{2}}d\mu_{k}\leq-\int_{{\mathbb{R}}^{N}}\frac{u\cdot{\Delta_{k}u}}{|x|^{2}}d\mu_{k}-(\overline{N}-4)\int_{{\mathbb{R}}^{N}}
\frac{u^{2}}{|x|^{4}}d\mu_{k}+4\gamma\int_{{\mathbb{R}}^{N}}\frac{1}{|x|^{4}}(u-u_{0,1})^{2}d\mu_{k}\\
=-\sum\limits_{n=0}^{\infty}\sum\limits_{i=1}^{d(n)}\int_{0}^{+\infty}\left[u_{n,i}(u^{''}_{n,i}+\frac{\overline{N}-1}{r}u^{'}_{n,i}+\frac{\lambda_{n}}{r^{2}}u_{n,i})r^{\overline{N}-3}+(\overline{N}-4)\cdot{u^{2}_{n,i}}r^{\overline{N}-5}\right]dr \\
+4\gamma\sum\limits_{n=1}^{\infty}\sum\limits_{i=1}^{d(n)}\int_{0}^{+\infty}u^{2}_{n,i}r^{\overline{N}-5}dr\\
=\sum\limits_{n=0}^{\infty}\sum\limits_{i=1}^{d(n)}\int_{0}^{+\infty}\left[|u^{'}_{n,i}|^{2}r^{\overline{N}-3}-\lambda_{n}u^{2}_{n,i}r^{\overline{N}-5}\right]dr+4\gamma\sum\limits_{n=1}^{\infty}\sum\limits_{i=1}^{d(n)}\int_{0}^{+\infty}
u^{2}_{n,i}r^{\overline{N}-5}dr.
\end{multline*}
So
\begin{multline*}
\int_{{\mathbb{R}}^{N}}|\Delta_{k}u|^{2}d\mu_{k}-C\int_{{\mathbb{R}}^{N}}\frac{|\nabla_{k}u|^{2}}{|x|^{2}}d\mu_{k}\\
\geq\sum\limits_{n=0}^{\infty}\sum\limits_{i=1}^{d(n)}\int_{0}^{+\infty}\left[\left(u^{''}_{n,i}+\frac{\overline{N}-1}{r}u^{'}_{n,i}
+\frac{\lambda_{n}}{r^{2}}u_{n,i}\right)^{2}r^{\overline{N}-1}-C|u^{'}_{n,i}|^{2}r^{\overline{N}-3}+\lambda_{n}Cu^{2}_{n,i}r^{\overline{N}-5}\right]dr\\
-4C\gamma\sum\limits_{n=1}^{\infty}\sum\limits_{i=1}^{d(n)}\int_{0}^{+\infty}u^{2}_{n,i}r^{\overline{N}-5}dr\\
=\sum\limits_{n=0}^{\infty}\sum\limits_{i=1}^{d(n)}\int_{0}^{+\infty}\left[|u^{''}_{n,i}|^{2}r^{\overline{N}-1}+A_{n}|u^{'}_{n,i}|^{2}r^{\overline{N}-3}
+B_{n}u^{2}_{n,i}r^{\overline{N}-5}\right]dr.
\end{multline*}
By integration by parts, we obtain
\begin{equation*}
A_{n}=\overline{N}-2\lambda_{n}-1-C,
\end{equation*}
\begin{equation*}
B_{n}=\left\{\begin{array}{ll} \lambda_{0}(\lambda_{0}-2(\overline{N}-4)+C),& n=0;\\
    \lambda_{n}(\lambda_{n}-2(\overline{N}-4)+C)-4C\gamma,& n\geq1,\\
   \end{array}\right.
\end{equation*}
since $\lambda_{0}=0$, then $B_{0}=0$.\\
Using the following weighted Hardy inequality
\begin{equation}
\int_{0}^{+\infty}|u^{'}|^{2}r^{\overline{N}-1}dr\geq\frac{({\overline{N}-2})^{2}}{4}\int_{0}^{+\infty}u^{2}r^{\overline{N}-3}dr,
\end{equation}
\begin{equation}
\int_{0}^{+\infty}|u^{'}|^{2}r^{\overline{N}-3}dr\geq\frac{({\overline{N}-4})^{2}}{4}\int_{0}^{+\infty}u^{2}r^{\overline{N}-5}dr.
\end{equation}
Denote
\begin{equation*}
I_{n,i}=\int_{0}^{+\infty}\left[|u^{''}_{n,i}|^{2}r^{\overline{N}-1}+A_{n}|u^{'}_{n,i}|^{2}r^{\overline{N}-3}+
B_{n}u^{2}_{n,i}r^{\overline{N}-5}\right]dr,
\end{equation*}
then we have
\begin{equation}
I_{n,i}\geq\left[A_{n}+\frac{({\overline{N}-2})^{2}}{4}\right]\int_{0}^{+\infty}|u^{'}_{n,i}|^{2}r^{\overline{N}-3}dr+
B_{n}\int_{0}^{+\infty}u^{2}_{n,i}r^{\overline{N}-5}dr.
\end{equation}
For $n=0$,
\begin{equation*}
I_{0,1}\geq\left(\frac{{\overline{N}}^{2}}{4}-C\right)\int_{0}^{+\infty}|u^{'}_{0,1}|^{2}r^{\overline{N}-3}dr,
\end{equation*}
so we get $C\leq\frac{{\overline{N}}^{2}}{4}$.\\
For $n\geq1$, take $C=\frac{{\overline{N}}^{2}}{4}$, we get
\begin{equation*}
\begin{split}
I_{n,i}&\geq-2\lambda_{n}\int_{0}^{+\infty}|u^{'}_{n,i}|^{2}r^{\overline{N}-3}dr+B_{n}\int_{0}^{+\infty}u^{2}_{n,i}r^{\overline{N}-5}dr \\
&\geq\left[-2\lambda_{n}\frac{({\overline{N}-4})^{2}}{4}+\lambda_{n}\left(\lambda_{n}-2(\overline{N}-4)+\frac{{\overline{N}}^{2}}{4}\right)-{\overline{N}}^{2}\gamma\right]\int_{0}^{+\infty}u^{2}_{n,i}r^{\overline{N}-5}dr \\
&=D_{n}\int_{0}^{+\infty}u^{2}_{n,i}r^{\overline{N}-5}dr,
\end{split}
\end{equation*}
\noindent
here
\begin{equation*}
D_{n}:=\lambda_{n}\left(\lambda_{n}-\frac{{\overline{N}}^{2}-8\overline{N}}{4}\right)-{\overline{N}}^{2}\gamma.
\end{equation*}
So
\begin{equation*}
 D_{1}=\frac{(N-5-2\gamma){\overline{N}}^{2}+4}{4}.
\end{equation*}
When $N\geq5+2\gamma$, $D_{1}\geq0$.
\begin{equation*}
D_{2}=\frac{2N{\overline{N}}^{2}}{4}\geq0,
\end{equation*}
$D_{n}\geq{D_{2}}\geq0$, $(n=3,4,\ldots)$, so the inequality (\ref{T-4-2}) holds.

Next we prove the optimality of the constant $\frac{{\overline{N}}^{2}}{4}$. For $\forall\epsilon>0$, take
\begin{equation*}
u_{\epsilon}=\left\{
\begin{array}{ccc}
1  &,r\leq1, \\
r^{-\frac{N-4+\epsilon}{2}}&,r>1.
\end{array}
\right.
\end{equation*}
By directly computing
\begin{equation*}
\lim\limits_{\epsilon\rightarrow{0}}\frac{\int_{{\mathbb{R}}^{N}}|\Delta_{k}u_{\epsilon}|^{2}d\mu_{k}}{\int_{{\mathbb{R}}^{N}}\frac{|\nabla_{k}u_{\epsilon}|^{2}}{|x|^{2}}d\mu_{k}}
=\frac{{\overline{N}}^{2}}{4}.
\end{equation*}

\end{proof}

\textbf{Acknowledgements.} This work is  supported by the national natural science foundation of China (Grant No. 11771395 and 11571306).

\end{document}